\newcommand{\QQ}{\mathbb{Q}}
\newcommand{\ZZ}{\mathbb{Z}}
\newcommand{\rank}{\operatorname{rank}}
\newcommand{\bq}{/\!\!/}
\newcommand{\B}{(S^3)^2\bq T^2}
\newcommand{\Vect}{\operatorname{Vect}}
\newcommand{\K}{\operatorname{K}}
\newcommand{\KO}{\operatorname{KO}}
\newcommand{\R}{\operatorname{R}}
	\newcommand*{\defeq}{\mathrel{\vcenter{\baselineskip0.5ex \lineskiplimit0pt
				\hbox{\scriptsize.}\hbox{\scriptsize.}}}%
		=}
\newtheorem{theorem}{Theorem}[section]
\newtheorem{corollary}[theorem]{Corollary}
\newtheorem{lemma}[theorem]{Lemma}
\newtheorem*{theorem*}{Theorem}
\newtheorem{prop}[theorem]{Proposition}
\newtheorem{question}[theorem]{Question}
\theoremstyle{remark}
\newtheorem{remark}[theorem]{Remark}
\theoremstyle{definition}
\def\equalsfill{$\m@th\mathord=\mkern-7mu
\cleaders\hbox{$\!\mathord=\!$}\hfill
\mkern-7mu\mathord=$}
\begin{document}

\abovedisplayskip=6pt plus3pt minus3pt
\belowdisplayskip=6pt plus3pt minus3pt

\title[Biquotient vector bundles with no inverse]{Biquotient vector bundles with no inverse}

\author[Jason DeVito]{Jason DeVito}
\address{The University of Tennessee at Martin, Tennessee, USA}
\email{jdevito1@utm.edu}

\author[David Gonz\'alez-\'Alvaro]{David Gonz\'alez-\'Alvaro}
\address{ETSI de Caminos, Canales y Puertos\\ Universidad Polit\'ecnica de Madrid\\ 28040 Spain}
\email{david.gonzalez.alvaro@upm.es}

\thanks{2010 \it  Mathematics Subject classification.\rm\ 
Primary 53C20.}

\thanks{The second author received support from MINECO grant MTM2017-85934-C3-2-P (Spain).  }

\begin{abstract} In previous work
, the second author and others have found conditions on a homogeneous space $G/H$ which imply that, up to stabilization, all vector bundles over $G/H$ admit Riemannian metrics of non-negative sectional curvature.  One important ingredient of their approach is Segal's result that the set of vector bundles of the form $G\times_H V$ for a representation $V$ of $H$ contains inverses within the class.  We show that this approach cannot work for biquotients $G\bq H$, where we consider vector bundles of the form $G\times_{H} V$. We call such vector bundles biquotient bundles. Specifically, we show that in each dimension $n\geq 4$ except $n=5$, there is a simply connected biquotient of dimension $n$ with a biquotient bundle which does not contain an inverse within the class of biquotient bundles.  In addition, we show that for $n\geq 6$ except $n=7$, there are infinitely many homotopy types of biquotients with the property that no non-trivial biquotient bundle has an inverse. Lastly, we show that every biquotient bundle over every simply connected biquotient $M^n = G\bq H$ with $G$ simply connected and with $n\in \{2,3,5\}$ has an inverse in the class of biquotient bundles.
\end{abstract}

\maketitle

\thispagestyle{empty}

\section{Motivation and results}

Throughout this article, $M$ will denote a closed manifold and $\Vect(M)$ the set of isomorphism classes of vector bundles over $M$. We will consider the case of real and complex vector bundles simultaneously, unless otherwise stated. It is well known that given $E\in\Vect(M)$ one can always find $F\in\Vect(M)$ such that the Whitney sum $E\oplus F$ is isomorphic to a trivial bundle, see e.g. \cite[Lemma~9.3.5]{AGP02}. We call such a bundle $F$ an \emph{inverse} for $E$. Observe that there are infinitely many inverses for a given bundle, although they all lie in the same stable class. In this article we discuss whether particularly interesting subsets $S\subset \Vect(M)$ contain their own inverses, motivated by certain K-theoretical and geometrical applications. 

As a warm up example, suppose that $M$ carries an action by a compact Lie group $G$, and consider the natural subset $\Vect_G(M)$ of $G$-vector bundles. As observed by Segal, any such bundle has an inverse in $\Vect_G(M)$, see \cite[Proposition~2.4]{Se68}. When the $G$-action is transitive, $M$ is a homogeneous space $G/H$ and $\Vect_G(G/H)$ coincides with the set of the so-called \emph{homogeneous (vector) bundles}, i.e.~those of the form $G\times_H V$ for a representation $H$ of $V$ \cite[p.~130, c)]{Se68}. Hence, every homogeneous bundle has an inverse which is also homogeneous. The purpose of this work is to show that the analogous property fails to hold, in general, for the more general class of biquotient bundles.


We first recall the definition of biquotient, following the approach by Totaro in \cite[Lemma~1.1~(3)]{To02}. Let $G$ be a compact Lie group and $Z(G)$ its center, and let $Z<G\times G$ be the diagonal normal subgroup $Z\defeq \{(g,g) : g\in Z(G)\}$. Any homomorphism $f\colon H\to (G\times G)/Z$ can be written in the form $f(h)=[f_1(h),f_2(h)]$ and determines a well-defined two-sided $H$-action $\star$ on $G$ by the rule $h\star g=f_1(h)gf_2(h)^{-1}$. When this action is free, the orbit space, denoted by $G\bq H$, inherits a manifold structure and is called a \emph{biquotient}.

By construction, associated to a biquotient $G\bq H$ there is a principal $H$-bundle $H\to G\to G\bq H$. Each representation $V$ of $H$ induces a vector bundle $G\times_H V$ over $G\bq H$, defined as the quotient of $G\times V$ via the diagonal action by $H$ consisting of the $\star$-action on $G$ and the representation action on $V$; the projection map is given by $[g,v]\mapsto [g]$. A vector bundle constructed in such a way will be called a \emph{biquotient (vector) bundle}. A biquotient bundle is real or complex depending on the nature of the representation $V$. We remark that, even though a given biquotient has several descriptions as a biquotient, when we refer to biquotient bundles over $G\bq H$ we always mean a bundle of the form $G \times_H V$.

\begin{theorem}\label{thm:biqinv}
In each dimension $n$ with $n\neq 2,3,5$, there is a simply connected biquotient $M = G\bq H$ and a biquotient bundle $G\times_H V$ with no inverse among biquotient bundles.
\end{theorem}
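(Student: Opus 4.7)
The plan is to combine a characteristic class obstruction in a low-dimensional base case with a product construction that propagates the result to all admissible higher dimensions. Every biquotient bundle $G\times_H V$ is classified by the composition $G\bq H \to BH \to BU(n)$, where the first map classifies the principal $H$-bundle $G \to G\bq H$ and the second is induced by $V$. Hence all Chern (or Pontryagin) classes of such a bundle lie in the image of $H^*(BH) \to H^*(G\bq H)$, and whenever $E = G\times_H V$ admits a biquotient inverse $F = G\times_H W$ the identity $c(E)\cdot c(F) = 1$ forces $c(F) = c(E)^{-1}$, which in turn must come from an actual representation of $H$. The entire strategy is to exhibit an $E$ whose formal Chern inverse cannot be so realized.

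For the base case $n=4$ I would take $M^4 = \B$ for a specific two-sided $T^2$-action producing a simply connected $4$-manifold (the natural candidate being a Hirzebruch-type $\sph^2$-bundle over $\sph^2$). Since $T^2$ is abelian, every representation splits into characters, so every biquotient bundle is a Whitney sum of biquotient line bundles and its total Chern class factors as $\prod_i(1+\alpha_i)$ with the $\alpha_i$ restricted to the image $\Lambda$ of the character lattice of $T^2$ inside $H^2(M^4;\ZZ)$. After describing $\Lambda$ explicitly, I would pick a biquotient bundle $E$ (a Whitney sum of a few carefully chosen line bundles) and verify by direct computation of $c(E)^{-1}$ in the truncated ring $H^*(M^4)$ that no factorization $\prod_j(1+\beta_j)$ with $\beta_j\in \Lambda$ equals $c(E)^{-1}$, thereby ruling out every candidate biquotient inverse.

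For $n\geq 6$ I would use a product trick. Given the base pair $(M_1,E)$ and any simply connected biquotient $M_2 = G'\bq H'$, the product $M_1\times M_2 = (G\times G')\bq(H\times H')$ is again a simply connected biquotient. Since any irreducible representation of $H\times H'$ is an external tensor product of irreducibles of $H$ and $H'$, every biquotient bundle on $M_1\times M_2$ is a Whitney sum of external tensor products $(G\times_H V_i)\boxtimes(G'\times_{H'} W_i)$. Suppose the pullback $\widetilde E$ of $E$ along the first projection had a biquotient inverse $F$ on $M_1\times M_2$; restricting $\widetilde E\oplus F \cong \text{trivial}$ to a fiber $M_1\times\{*\}$ yields a trivialization of $E\oplus F|_{M_1\times\{*\}}$ over $M_1$, where $F|_{M_1\times\{*\}} = \bigoplus_i (G\times_H V_i)^{\oplus \dim W_i} = G\times_H\bigl(\bigoplus_i V_i^{\oplus \dim W_i}\bigr)$ is itself a biquotient bundle on $M_1$, contradicting the base hypothesis on $E$. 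Taking $M_2 = \sph^k$ for $k\geq 2$ (with simply connected presentations $\sph^2 = \SU(2)/\U(1)$, $\sph^3 = \SU(2)$, $\sph^4 = \Sp(2)/\Sp(1)$, etc.) produces examples for every $n = 4+k$ with $k\geq 2$; combined with the base case this covers $n=4$ and every $n\geq 6$, while the absence of a simply connected $1$-dimensional biquotient is precisely what forces the exclusion $n=5$.

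The principal obstacle is the base case: one must identify an explicit simply connected $4$-dimensional biquotient and a biquotient bundle whose formal Chern inverse admits no factorization over $\Lambda$. This requires both a cohomological computation pinning down $\Lambda \subset H^2(M^4;\ZZ)$ and a finite combinatorial case analysis of possible matchings, after which the higher-dimensional statements follow formally from the product argument.
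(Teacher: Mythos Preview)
Your product argument for propagating the base case to dimensions $n\geq 6$ is correct and is actually a cleaner route to Theorem~\ref{thm:biqinv} than the paper takes: the paper constructs new biquotients $(S^3)^k\bq T^k$ in each even dimension satisfying a cohomological condition called Property~$(\ast)$, because it is simultaneously aiming at the stronger Theorem~\ref{thm:improve}; for Theorem~\ref{thm:biqinv} alone your restriction-to-a-slice argument would suffice.

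The genuine gap is the base case. Your ``natural candidate'', a Hirzebruch $S^2$-bundle over $S^2$, cannot work. Up to diffeomorphism those bundles are $S^2\times S^2$ and $\CP^2\sharp\overline{\CP^2}$; the first is homogeneous, so Segal already supplies inverses, and on the second \emph{every} biquotient bundle has a biquotient inverse. Indeed, since $(S^3)^2$ is $2$-connected, every line bundle on $(S^3)^2\bq T^2$ is a biquotient bundle (Proposition~\ref{prop:allbundles}), so your lattice $\Lambda$ is all of $H^2$. Writing the intersection form of $\CP^2\sharp\overline{\CP^2}$ as $\operatorname{diag}(1,-1)$ in a basis $u,v$, the stable class of $\bigoplus_i L_{a_iu+b_iv}$ is determined by the triple $\bigl(\sum a_i,\sum b_i,\sum(a_i^2-b_i^2)\bigr)$, and every triple $(A,B,N)$ with $N\equiv A+B\pmod 2$ arises from such a sum (take the single summand $(A,B)$ and adjoin pairs $(1,0),(-1,0)$ or $(0,1),(0,-1)$ to shift $N$ by $\pm 2$). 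Hence your proposed Chern-class factorization obstruction is vacuous on both Hirzebruch surfaces.

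The paper's base case is $\CP^2\sharp\CP^2$, which \emph{is} a genuine two-sided biquotient $(S^3)^2\bq T^2$ but is \emph{not} an $S^2$-bundle over $S^2$. What singles it out is that its intersection form is positive definite; this is exactly Property~$(\ast)$: $\sum x_i^2=0$ in $H^4$ forces every $x_i\in H^2$ to vanish. The obstruction then collapses to a one-line argument rather than a ``finite combinatorial case analysis'': for any sum of line bundles $L=\bigoplus L_i$ one has $c_1(L)^2-2c_2(L)=\sum c_1(L_i)^2$, so if $E\oplus F$ is trivial with $E,F$ both sums of line bundles, positive-definiteness forces every summand to be trivial (Proposition~\ref{prop:noreverse}). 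Thus any non-trivial biquotient line bundle on $\CP^2\sharp\CP^2$ already lacks a biquotient inverse, and your product trick then carries this to all $n\geq 6$.
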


We stress that Theorem~\ref{thm:biqinv} equally holds for real and complex biquotient bundles. Simple examples satisfying the statement in Theorem~\ref{thm:biqinv} include the connected sum $\B\approx\mathbb{C}P^2\sharp\mathbb{C}P^2$ and Eschenburg's inhomogeneous flag manifold $SU(3)\bq T^2$, see Proposition \ref{PROP:CP2CP2_and_Eschenburg_satisfy_ast}. By taking appropriate products with certain bundles over $\mathbb{C}P^2\sharp \mathbb{C}P^2$, we can improve Theorem \ref{thm:biqinv} in dimension $n\geq 6$.


\begin{theorem}\label{thm:improve}
In each dimension $n\geq 6$ with $n\neq 7$ there are infinitely many homotopy types of simply connected biquotients $M = G\bq H$ with the following stronger property:  if $V$ is any non-trivial representation of $H$, then the biquotient bundle $G\times_H V$ does not have an inverse among biquotient bundles.  Moreover, when $n=7$ there are infinitely many homeomorphism types with this stronger property.
\end{theorem}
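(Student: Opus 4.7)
The plan is to bootstrap from $M_0 = \mathbb{C}P^2 \sharp \mathbb{C}P^2 \approx \B$. I would first strengthen Proposition~\ref{PROP:CP2CP2_and_Eschenburg_satisfy_ast} to show that \emph{every} non-trivial biquotient bundle over $M_0$ fails to have a biquotient inverse, and then propagate this obstruction to infinite families of higher-dimensional biquotients built from $M_0$.

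For the strong property on $M_0$, note that every representation of $T^2$ decomposes into characters indexed by $(a,b)\in\mathbb{Z}^2$, so every complex biquotient bundle on $M_0$ is a direct sum of biquotient line bundles $L_{a,b}$. Computing $c_1(L_{a,b}) \in H^2(M_0;\mathbb{Z}) \cong \mathbb{Z}^2$ identifies the sub-monoid $\mathcal{B} \subset \widetilde{K}^0(M_0)$ of reduced classes of biquotient bundles explicitly. The bundle $E$ has a biquotient inverse iff $-[E] \in \mathcal{B}$. Pairing with a suitable element of $H^4(M_0;\mathbb{Z})$ (for instance, a square of a hyperplane class) should exhibit a linear functional non-negative on $\mathcal{B}$ and strictly positive on every non-zero class, so that $\mathcal{B}$ lies in a proper cone. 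Realification handles the real case.

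To propagate to higher dimensions, for any simply connected biquotient $N = G' \bq H'$ the product $M_0 \times N$ is the biquotient $((S^3)^2 \times G') \bq (T^2 \times H')$, and every biquotient bundle on it decomposes as a direct sum of external tensor products $\xi \boxtimes \eta$ with $\xi \to M_0$ and $\eta \to N$ biquotient. Restricting to $M_0 \times \{n_0\}$ yields a biquotient bundle on $M_0$, so a non-trivial $M_0$-component forbids a biquotient inverse by Step~1. For infinite families in each dimension $n \geq 6$ with $n \neq 7$: when $n \geq 8$ one can take $N_k$ from any of the standard infinite families of biquotients of dimension $n-4$ (Eschenburg spaces, biquotients of the form $(S^3)^k \bq T^\ell$, or products thereof), distinguishable via cohomology rings or characteristic numbers; when $n = 6$ one instead uses projective bundles $P(E_k) \to M_0$ of rank-$2$ biquotient bundles $E_k$ parametrized by Chern classes, each a $6$-dimensional biquotient to which the same cone argument applies. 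When $n = 7$, $\dim N = 3$ forces $N = S^3$ (yielding a single homotopy type), so one replaces the product construction by an infinite Eschenburg family $E_{p,q} = \SU(3) \bq S^1$ and verifies directly that Step~1's cone argument extends to their biquotient bundle monoid, giving the infinitely many \emph{homeomorphism} types.

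The main obstacle lies in the propagation step: external summands of the form $\mathbf{1}_{M_0} \boxtimes \eta$ restrict trivially to $M_0 \times \{n_0\}$, so one must rule out their cancelling the obstruction coming from other summands with non-trivial $M_0$-factor. I expect this forces the strong property to hold also for $N_k$, requiring a simultaneous inductive setup on both factors and imposing non-trivial restrictions on the admissible families chosen in the last step.
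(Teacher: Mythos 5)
Your reduction to sums of line bundles over a torus-structure-group biquotient and the ``sum of squares of first Chern classes'' obstruction is exactly the mechanism of Proposition~\ref{prop:noreverse}, so Step~1 over $\mathbb{C}P^2\sharp\mathbb{C}P^2$ is fine. But the construction of the infinite families has genuine gaps, and one outright error. The $n=7$ plan fails: for an Eschenburg space $\SU(3)\bq S^1$ one has $H^2\cong\ZZ$, $H^4$ finite and $H^6=0$, so $\oplus_{i>0}H^{2i}(M,\QQ)=H^2(M,\QQ)=\QQ$ and, by Theorem~\ref{thm:biqinv_sufficient_conditions} (Lemma~\ref{LEM:H2_existence_of_inverses}, plus the $4i$-condition for the real case), \emph{every} biquotient bundle over such a space has a biquotient inverse. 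No ``cone argument'' can extend there; the paper instead uses $7$-manifolds $Q(s,t)=(S^3)^3\bq T^2$ whose cohomology ring is that of $(\mathbb{C}P^2\sharp\mathbb{C}P^2)\times S^3$ (so Property $(\ast)$ holds) and distinguishes infinitely many homeomorphism types by $p_1=\pm(6-s^2-(s-t)^2)u$ via Novikov's theorem. Separately, your product scheme $M_0\times N_k$ with $\dim N_k=n-4$ cannot produce infinitely many homotopy types when $n=8$ or $n=9$: simply connected biquotients of dimension $4$ and $5$ form finite lists \cite{De14,Pa04}, so there are no infinite families to feed in. The paper avoids this by constructing new examples $R(A)=(S^3)^k\bq T^k$ in every even dimension $\geq 6$ and taking $R(p)\times S^3$ in odd dimensions $\geq 9$.

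The propagation step you flag is also not merely a technicality: to kill summands of the form $\mathbf{1}\boxtimes\eta$ you need the factor $N$ itself to have the strong property, which in this framework means $N$ must be presented with torus structure group and satisfy Property $(\ast)$ (or have $H^2=0$, as $S^3$ does); with that hypothesis the product does inherit $(\ast)$ by a Künneth argument, but your proposed ``standard families'' are not checked against it and some members fail it (any factor contributing a class $x\neq 0$ with $x^2=0$, e.g.\ an $S^2$ factor, destroys $(\ast)$, and the $7$-dimensional Eschenburg spaces fail the strong property outright as above). Likewise, your $n=6$ projectivization idea is morally the paper's $k=3$ case of $R(A)$ (these are $S^2$-bundles over $\mathbb{C}P^2\sharp\mathbb{C}P^2$), but the two substantive points are missing: verifying Property $(\ast)$ (the paper needs the condition $A_{i1}\neq 0$ in Proposition~\ref{PROP:RA_satisifes_ast}) and proving that infinitely many homotopy types actually occur (Proposition~\ref{PROP:infiniteness_RA} is a delicate cohomology-ring argument; note $P(E)\cong P(E\otimes L)$, so Chern classes of $E_k$ alone do not distinguish the total spaces). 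As it stands, the proposal defers precisely the parts of the proof that carry the content of Theorem~\ref{thm:improve}.
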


In fact, when $n\neq 7$, we find infinitely many pairwise non-isomorphic cohomology rings.  Our examples in dimension $7$ are certain $S^3$-bundles over $\mathbb{C}P^2\sharp \mathbb{C}P^2$ whose integral cohomology ring is isomorphic to that of $S^3\times \mathbb{C}P^2\sharp \mathbb{C}P^2$.  We distinguish them by their first Pontryagin class.


It would be interesting to derive sufficient conditions on a given closed biquotient $G\bq H$ for the existence of inverses within the class of biquotient bundles. As mentioned above, by Segal's work one such condition is that $G\bq H$ is actually a homogeneous space. Here we provide the following topological sufficient conditions.

\begin{theorem}\label{thm:biqinv_sufficient_conditions}
Let $M = G\bq H$ be a closed biquotient. If $\oplus_{i>0} H^{4i}(M,\QQ)=0$, then any real biquotient bundle has an inverse among biquotient bundles.


Suppose moreover that one of the following stronger assumptions holds:
\begin{itemize}
\item $\oplus_{i>0} H^{2i}(M,\QQ)=0$

\item $\oplus_{i>0} H^{2i}(M,\QQ)=H^{2}(M,\QQ)=\QQ$ and $M$ and $G$ are simply connected.
\end{itemize}
Then any complex biquotient bundle has an inverse among biquotient bundles.
\end{theorem}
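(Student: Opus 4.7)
The plan is to translate the existence of biquotient inverses into a question about the image of the associated-bundle map into K-theory, and to exploit the rational vanishing or concentration of $\widetilde{K}^0(M)$ and $\widetilde{KO}^0(M)$ that the hypotheses provide. Write $\alpha\colon RO(H)\to KO^0(M)$ and $\alpha\colon R(H)\to K^0(M)$ for the homomorphisms $V\mapsto[G\times_H V]$. A biquotient bundle $G\times_H V$ admits a biquotient inverse if and only if there is a representation $W$ with $\alpha(V\oplus W)$ equal to the class of a trivial bundle, since stable triviality of $G\times_H(V\oplus W)$ over the finite CW complex $M$ upgrades to an honest triviality after summing with a trivial biquotient bundle coming from a trivial representation. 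So the task reduces to proving that for every $V$ the class $-\alpha(V)$ lies in the image of $\alpha$ modulo the subgroup of trivial bundle classes.

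For the first conclusion, the Pontryagin character yields a ring isomorphism $KO^0(M)\otimes\QQ\cong\bigoplus_i H^{4i}(M;\QQ)$, so the hypothesis forces $\widetilde{KO}^0(M)\otimes\QQ=0$. As $M$ is compact, $\widetilde{KO}^0(M)$ is a finitely generated abelian group and is therefore annihilated by some $N\in\ZZ_{\geq 1}$. For any real representation $V$ this gives $\alpha(V^{\oplus N})=N\dim V$ in $KO^0(M)$; hence $G\times_H V^{\oplus N}$ is stably trivial, and adding a trivial representation of sufficient dimension makes it genuinely trivial, so $W:=V^{\oplus(N-1)}\oplus\RR^{\oplus m}$ is a biquotient inverse of $V$. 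The stronger hypothesis $\bigoplus_{i>0}H^{2i}(M;\QQ)=0$ is handled verbatim, with the Chern character in place of the Pontryagin character.

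The third bullet is the delicate case. The Chern character identifies $\widetilde{K}^0(M)\otimes\QQ$ with $H^2(M;\QQ)=\QQ$, and because products in this $\QQ$-algebra land in $\bigoplus_{i\geq 2}H^{2i}(M;\QQ)=0$, it is square-zero; in particular $xy$ is torsion for every $x,y\in\widetilde{K}^0(M)$, and if $T$ denotes the torsion subgroup then $\widetilde{K}^0(M)/T\cong\ZZ\cdot g$ for some generator $g$. To produce a character $\chi\colon H\to U(1)$ with $c_1(L_\chi)\neq 0$, where $L_\chi:=G\times_H\CC_\chi$, I use the long exact homotopy sequence of the principal fibration $H\to G\to M$: together with $\pi_2(G)=0$ and the simple connectivity of $G$ and $M$, it forces $H$ to be connected and yields $\pi_2(M)\cong\pi_1(H)$, while dualising rationally identifies $H^2(BH;\QQ)$ with $H^2(M;\QQ)$ via the classifying map of the bundle; since for a connected compact Lie group $H$ the characters rationally generate $H^2(BH;\QQ)$, such a $\chi$ exists. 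Setting $t:=\alpha(\chi)-1\in\widetilde{K}^0(M)$, the square-zero observation gives $t\equiv\ell g\pmod T$ with $\ell\neq 0$, so $\alpha(\chi^k)-1=(1+t)^k-1\equiv k\ell g\pmod T$ for every $k\in\ZZ$.

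Given an arbitrary representation $V$, write $\alpha(V)-\dim V\equiv n g\pmod T$ and, after replacing $\chi$ by $\chi^{-1}$ if needed, assume $\ell>0$. The representation $V':=V^{\oplus\ell}\oplus\chi^{-n}$ then satisfies $\alpha(V')-(\ell\dim V+1)\in T$, so if $N$ annihilates $T$ we obtain $\alpha(V'^{\oplus N})=N(\ell\dim V+1)$ in $K^0(M)$, and the stabilisation argument from the first two cases produces the desired biquotient inverse of $V$. The main difficulty lies in the third bullet, where the substance is to locate the character $\chi$ realising the generator of $H^2(M;\QQ)$ rationally and then package the square-zero structure to reduce the problem to the torsion cancellation already handled.
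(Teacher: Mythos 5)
Your argument is correct, and while the real case and the two ``finite $K$-theory'' cases coincide with the paper's proof (Lemma~\ref{LEM:finite_Ktheory_existence_of_inverses}: character isomorphism rationally kills $\widetilde{\K}(M)$ resp.\ $\widetilde{\KO}(M)$, annihilate the finite reduced group by $N$, and take $V^{\oplus(N-1)}\oplus\text{trivial}$ as the inverse), your treatment of the third bullet genuinely differs from the paper's Lemma~\ref{LEM:H2_existence_of_inverses}. The paper first shows that \emph{every} integral class in $H^2(M,\ZZ)$ is the first Chern class of a biquotient line bundle: it uses $\pi_2(M)\cong\pi_1(H)\neq 0$ to split a circle factor off $H$ up to cover (citing \cite[Proposition 3.3]{De17}) and then invokes Proposition~\ref{prop:allbundles} on free torus actions; with exact cancellation of $c_1(E)$ available, the remainder is torsion and the argument closes as in the finite case. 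You instead produce only \emph{one} character $\chi\colon H\to U(1)$ whose associated bundle $G\times_H\CC_\chi$ has rationally non-trivial $c_1$, by observing that $G$ being a simply connected compact group ($\pi_1=\pi_2=0$) makes the classifying map $M\to BH$ an isomorphism on $H^2$, and that characters rationally generate $H^2(BH;\QQ)$ for connected $H$ (connectedness of $H$ again coming from the homotopy sequence); you then cancel only rationally, compensating the integral mismatch by passing to $V^{\oplus\ell}\oplus\chi^{-n}$ and absorbing the leftover torsion with the annihilator $N$, using that products of reduced classes are torsion since $\oplus_{i\geq 2}H^{2i}(M,\QQ)=0$. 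The trade-off: the paper's route needs the stronger structural inputs (circle splitting of $H$ and the surjectivity statement of Proposition~\ref{prop:allbundles}, which the paper also wants for other purposes), but yields a cleaner inverse and the stronger fact that all line bundles are biquotient bundles; your route is more self-contained, avoiding Proposition~\ref{prop:allbundles} entirely, at the cost of multiplying $V$ by the auxiliary integer $\ell$ and of spelling out (which you only sketch, but correctly) why the classifying map identifies $H^2(BH;\QQ)$ with $H^2(M;\QQ)$.
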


From the classification of low-dimensional biquotients \cite{To02,De14,Pa04}, one easily sees that Theorem \ref{thm:biqinv_sufficient_conditions} applies to all simply connected biquotients in dimension $2,3,$ and $5$.  Thus, we immediately deduce the following corollary which explains the exceptions in Theorem \ref{thm:biqinv}.


\begin{corollary}\label{COR:dimensions_2_3_5}
For any presentation $G\bq H$ of a simply connected biquotient of dimension $2,3,$ or $5$ with $G$ simply connected, any biquotient bundle has an inverse among biquotient bundles.
\end{corollary}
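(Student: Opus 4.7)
The plan is to deduce this corollary directly from Theorem \ref{thm:biqinv_sufficient_conditions} by checking, via the classification of low-dimensional simply connected biquotients from \cite{To02,De14,Pa04}, that every such biquotient $M$ in dimensions $2$, $3$, or $5$ satisfies one of the three cohomological hypotheses appearing there. Since those hypotheses depend only on the rational cohomology of $M$ and on simple-connectivity of $G$, verifying them once per diffeomorphism type simultaneously handles every presentation $G\bq H$ with $G$ simply connected.

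First I would dispose of dimensions $2$ and $3$. The only simply connected closed manifolds that arise as biquotients in these dimensions are $S^2$ and $S^3$. For $S^3$ the rational cohomology is concentrated in degrees $0$ and $3$, so $\bigoplus_{i>0} H^{2i}(M,\QQ) = 0$ and the first stronger hypothesis of Theorem \ref{thm:biqinv_sufficient_conditions} applies to both real and complex bundles. For $S^2$ one has $H^2(M,\QQ) = \QQ$ and $H^{2i}(M,\QQ) = 0$ for $i \geq 2$, and the assumption that $G$ is simply connected makes the second stronger hypothesis available; the corresponding real statement follows from the unconditional $H^{4i}=0$ hypothesis.

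For dimension $5$, Poincar\'e duality together with $\pi_1(M) = 0$ forces $H^4(M,\QQ) \cong H_1(M,\QQ) = 0$, and $H^{4i}(M,\QQ) = 0$ for $i \geq 2$ by dimension, so the real conclusion of Theorem \ref{thm:biqinv_sufficient_conditions} is immediate. For the complex case I would invoke the classification to reduce to the four diffeomorphism types $S^5$, $S^2 \times S^3$, the nontrivial $S^3$-bundle over $S^2$, and the Wu manifold $SU(3)/SO(3)$. In the first and fourth, $H^2(M,\QQ) = 0$ (the Wu manifold has only $2$-torsion in degree $2$), so the first stronger hypothesis applies. In the second and third, $H^2(M,\QQ) = \QQ$ and $H^4(M,\QQ) = 0$, so with $G$ and $M$ both simply connected the second stronger hypothesis applies.

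I do not anticipate any genuine obstacle in this argument: the content lies entirely in Theorem \ref{thm:biqinv_sufficient_conditions} and in the cited classification. The only care required is the bookkeeping verifying that the second stronger hypothesis is actually available in the $H^2(M,\QQ) = \QQ$ cases, which is precisely why the corollary imposes the assumption that $G$ be simply connected.
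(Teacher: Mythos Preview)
Your proposal is correct and follows essentially the same approach as the paper: invoke the classification from \cite{To02,De14,Pa04} to list the six diffeomorphism types, then apply the appropriate clause of Theorem~\ref{thm:biqinv_sufficient_conditions} (equivalently Lemmas~\ref{LEM:finite_Ktheory_existence_of_inverses} and~\ref{LEM:H2_existence_of_inverses}) case by case. Your Poincar\'e duality remark for the real case in dimension~$5$ is a small streamlining, but otherwise the argument is the paper's own.
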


As shown by Totaro \cite[Lemma~3.1]{To02}, every simply connected biquotient has such a presentation.

Our interest in the existence of inverses within the class of biquotient bundles arises when studying the following:

\begin{question}\label{QUEST}
Which vector bundles over a given biquotient are isomorphic to a biquotient bundle? 
\end{question}

The importance of Question~\ref{QUEST} is that the presence of a biquotient bundle structure has very nice topological and geometrical implications. On the topological side, Singhof developed a method that allows the computation of certain characteristic classes \cite{Si93}. On the geometrical side, biquotient bundles admit metrics of non-negative sectional curvature with very interesting properties, see e.g. \cite{Ta08}. 

Roughly speaking, the existence of inverses within a given subset $S\subset \Vect(M)$ allows the use of K-theory in order to detect which bundles over $M$ actually belong to $S$, up to stabilization. To make this connection precise, let $\K(M)$ and $\K(S)$ denote the Grothendieck groups of $\Vect(M)$ (i.e.~topological K-theory) and of $S$ respectively, and denote by $\imath\colon \K(S)\to \K(M)$ the induced group inclusion. Given an integer $k$, we denote also by $k$ the trivial bundle of rank $k$.






\begin{prop}\label{PROP}
Suppose that $S\subset \Vect(M)$ satisfies the following properties:
\begin{enumerate}
\item $S$ contains all trivial bundles,
\item the sum of two bundles in $S$ belongs to $S$, and
\item every bundle in $S$ has an inverse in $S$.
\end{enumerate}
Then, the following are equivalent:
\begin{itemize}
\item The natural inclusion $\imath\colon \K(S)\to \K(M)$ is surjective.
\item For every vector bundle $E$ over $M$ there exists an integer $k$ (depending on $E$) such that the Whitney sum $E\oplus k$ is isomorphic to a bundle in $S$.
\end{itemize}
\end{prop}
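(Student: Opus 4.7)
The plan is to use hypotheses (1)--(3) to extract a normal form for elements of $\K(S)$, and then match it against the analogous normal form in $\K(M)$ furnished by the classical fact that every bundle on $M$ admits a Whitney inverse \cite[Lemma~9.3.5]{AGP02}. Concretely, I first show that every class in $\K(S)$ can be written as $[E]-[k]$ for some $E\in S$ and some trivial bundle $k$: starting from an arbitrary representative $[E_1]-[E_2]$ with $E_1,E_2\in S$, hypothesis (3) yields $E_2'\in S$ with $E_2\oplus E_2'\cong k$ trivial, and then (2) gives $E_1\oplus E_2'\in S$, so $[E_1]-[E_2]=[E_1\oplus E_2']-[k]$. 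Condition (1) ensures that the trivial bundles appearing as negatives are themselves elements of $S$, which makes the presentation well-defined inside $\K(S)$.

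For the forward implication, assume $\imath$ is surjective and let $E\in\Vect(M)$. By the normal form there exist $F\in S$ and a trivial $k$ with $\imath([F]-[k])=[E]$ in $\K(M)$, i.e.\ $[F]=[E\oplus k]$ in $\K(M)$. Stable isomorphism then produces an integer $m\geq 0$ with $F\oplus m\cong E\oplus k\oplus m$; since $F\in S$ and the trivial bundle of rank $m$ is in $S$ by (1), closure under $\oplus$ in (2) places $F\oplus m\in S$, and hence $E\oplus(k+m)\in S$, as required.

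For the reverse implication, let $\beta\in\K(M)$. Writing any representing pair and using that every bundle on $M$ has a Whitney inverse, we may present $\beta=[E]-[\ell]$ with $\ell$ trivial. The hypothesis supplies $k$ with $E\oplus k\in S$; then $k+\ell$ is trivial, hence in $S$ by (1), so $[E\oplus k]-[k+\ell]$ is a legitimate class in $\K(S)$, and its image under $\imath$ is $[E]-[\ell]=\beta$.

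The argument is essentially a diagram chase, and I do not anticipate a substantial obstacle. The one point that rewards care is the construction of $\K(S)$ as the group completion of the commutative monoid $(S,\oplus)$: one should verify that hypothesis (3) collapses the generic group-completion relation to the clean statement that $[E]=[E']$ in $\K(S)$ iff $E\oplus k\cong E'\oplus k$ for some trivial $k$. Once this is in place, the normal form is immediate and both implications follow as above, with no reliance on the specific geometry of biquotients.
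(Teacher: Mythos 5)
Your proof is correct and follows essentially the same elementary diagram chase as the paper: use hypothesis (3) to replace the negative part of a class in $\K(S)$ by a trivial bundle for the forward direction, and use the hypothesis on representatives to land in $\K(S)$ for the converse. The only cosmetic difference is in the converse, where you first trivialize the negative term via the classical Whitney-inverse fact and apply the hypothesis once, while the paper applies the hypothesis to both $E_1$ and $E_2$ directly; both are equally valid.
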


We remark that Proposition~\ref{PROP} and its proof holds for both complex or real vector bundles; note that in the real case the Grothendieck group of a set of vector bundles is usually denoted by $\KO(-)$ instead of $\K(-)$.
 
The upshot of Proposition~\ref{PROP} is that K-theory has been extensively investigated and there is a powerful machinery at our disposal for studying the possible surjectivity of $\imath$. This has been done in various situation as we review next, where we restrict to the case of complex bundles for simplicity. The assumptions on $S$ are satisfied when $M$ is a $G$-manifold and $S=\Vect_G(M)$; its Grothendieck group $\K(S)$ is usually denoted by $\K_G(M)$ \cite{Se68}. When $M=G/H$ is homogeneous, $\K_G(M)$ identifies naturally with the representation ring $\R(H)$. In this case, the study of the surjectivity of the map $\imath\colon \R(H)\to \K(G/H)$ was already initiated by Atiyah and Hirzebruch in the seminal article in which K-theory was founded \cite[Section~5]{AH61}. Recently, surjectivity of $\imath\colon \R(H)\to \K(G/H)$ has been characterized in the case where $H,G$ are connected and $\pi_1 G$ is torsion-free, and it turns out that it is equivalent to the condition $\rank G-\rank H\leq 1$ (see \cite[Theorem~D]{AGZ} and its proof). In the case where $M$ is a cohomogeneity one $G$-space, sufficient conditions for the surjectivity of $\imath\colon \K_G(M)\to \K(M)$ have been given in \cite[Corollary~2.2]{Ca}.

We finish by providing a geometric application which motivated the present work. Some particular cases of Proposition~\ref{PROP} were crucial in the series of papers \cite{Go17,GZ,AGZ}. Therein, the surjectivity of the map $\imath\colon \K_G(M)\to \K(M)$, for certain manifolds $M$ on which a compact Lie group $G$ acts transitively or by cohomogeneity one, is used to show that every vector bundle over $M$ (is a $G$-vector bundle and hence) carries a metric of non-negative sectional curvature, up to stabilization. This partially answers the so-called Converse Question to the Soul Theorem of Cheeger and Gromoll \cite{CG72} asking which vector bundles over a non-negatively curved closed manifold admit a complete metric of non-negative curvature. 

The set $S$ of biquotient bundles over a given $G\bq H$ satisfies the first two assumptions in Proposition~\ref{PROP}, since the rule that assigns biquotient bundles to representations commutes with respect to direct sums (and tensor products). Unfortunately, the third condition is not satisfied, in general, as Theorem~\ref{thm:biqinv} shows. Thus, Proposition~\ref{PROP} does not help to partially answer Question~\ref{QUEST} nor the Converse Question to the Soul Theorem over arbitrary biquotients. In any case, we are not aware of any general result concerning the surjectivity of the natural map $\imath\colon \R(H)\to \K( G\bq H)$.

We now describe the outline of the paper.  Section \ref{sec:Ktheory} is devoted to proving Proposition \ref{PROP} as well as Theorem \ref{thm:biqinv_sufficient_conditions}.  The main tool used is $K$-theory. Section \ref{sec:noinv} contains the proof of Theorems \ref{thm:biqinv} and \ref{thm:improve}.  
In order to find examples, we identify a cohomological property which guarantees that the only biquotient bundles with inverses are trivial (Proposition \ref{prop:noreverse}).  We then just need to construct the relevant biquotients verifying this property; we do this in Proposition \ref{PROP:RA_satisifes_ast}.

\textbf{Acknowledgments. } We would like to thank Jeffrey Carlson for helpful conversations. The second author is very grateful to Manuel Amann, Igor Belegradek, Gabino González-Diez, Marcus Zibrowius and Wolfgang Ziller for highly useful comments on a preliminary version of this manuscript.


\section{K-theoretical proofs: biquotients bundles with inverses}\label{sec:Ktheory}

Here we prove Theorem~\ref{thm:biqinv_sufficient_conditions} and Proposition~\ref{PROP}. The proofs use basics of K-theory and stable classes of vector bundles. We refer e.g.~to \cite[Section~9]{AGP02} for the reader unfamiliar with the topic.

\begin{lemma}\label{LEM:finite_Ktheory_existence_of_inverses}
Let $M$ be a closed manifold satisfying $\oplus_{i>0} H^{2i}(M,\QQ)=0$ (resp. $\oplus_{i>0} H^{4i}(M,\QQ)=0$). Let $S$ be a set of complex (resp. real) vector bundles over $M$ satisfying the following properties:
\begin{enumerate}
\item $S$ contains all trivial bundles,
\item the sum of two bundles in $S$ belongs to $S$, and
\end{enumerate}
Then every bundle in $S$ has an inverse in $S$.
\end{lemma}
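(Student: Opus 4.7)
The plan is to leverage the Chern (resp.\ Pontryagin) character to convert the cohomological hypothesis into finiteness of reduced K-theory (resp.\ KO-theory), and then turn that finiteness into an explicit inverse that happens to lie in $S$.

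First I would invoke the Chern character $\mathrm{ch}\colon \K(M)\otimes\QQ \to \oplus_{i\geq 0} H^{2i}(M,\QQ)$, which is a ring isomorphism for any finite CW complex. The hypothesis $\oplus_{i>0} H^{2i}(M,\QQ)=0$ collapses the right-hand side to $H^0(M,\QQ)$, which is already generated by the image of the trivial bundles under $\mathrm{ch}$. Hence $\widetilde{\K}(M)$ is torsion; and since $\K(M)$ is finitely generated (a standard consequence of the finite CW structure on the closed manifold $M$, via the Atiyah--Hirzebruch spectral sequence), $\widetilde{\K}(M)$ is in fact finite.

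Next, given $E\in S$, let $n$ be the order of $[E]-\rank(E)$ in $\widetilde{\K}(M)$. Then $nE$ is stably trivial, so one can choose trivial bundles $T_1,T_2$ with $nE\oplus T_1 \cong T_2$. The candidate inverse is $F\defeq (n-1)E\oplus T_1$: hypotheses (1) and (2) on $S$ force $F\in S$, and $E\oplus F\cong T_2$ is trivial, giving the desired inverse inside $S$. For the real case the argument goes through verbatim after replacing the Chern character by the Pontryagin character $\mathrm{ph}\colon \KO(M)\otimes\QQ \to \oplus_{i\geq 0} H^{4i}(M,\QQ)$, which is also an isomorphism on finite CW complexes, so that $\oplus_{i>0} H^{4i}(M,\QQ)=0$ forces $\widetilde{\KO}(M)$ to be finite and the same construction of $F$ applies.

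The only step requiring any care is the finite-generation input upgrading ``torsion'' to ``finite order $n$ for this specific $E$''; one should be explicit about citing finite generation of $\K(M)$ and $\KO(M)$ for closed manifolds. Beyond that the argument is a purely formal manipulation within the stable class, and makes essential use only of closure of $S$ under Whitney sums and under absorption of trivial bundles.
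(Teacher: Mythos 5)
Your proposal is correct and follows essentially the same route as the paper: both deduce from the rational cohomology hypothesis (via the Atiyah--Hirzebruch machinery / Chern and Pontryagin characters of \cite{AH61}) that the reduced $\K$- or $\KO$-theory of $M$ is finite, so some multiple $nE$ is stably trivial, and then take $(n-1)E$ plus a trivial bundle as the inverse, which lies in $S$ by hypotheses (1) and (2). The only cosmetic remark is that your worry about upgrading ``torsion'' to ``finite order'' is unnecessary, since a torsion element of $\widetilde{\K}(M)$ has finite order by definition.
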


\begin{proof}
Using the Atiyah-Hirzebruch spectral sequence for K-theory it follows that the condition $\oplus_{i>0} H^{2i}(M,\QQ)=0$ is equivalent to the reduced K-theory ring of $M$ being finite (see \cite[Section~2.4]{AH61}). The latter is equivalent to the fact that the set of stable classes of complex bundles over $M$ is finite. In particular, for any complex bundle $E$ over $M$ there exist integers $n,k$ such that $nE\oplus k$ is a trivial bundle, where $nE$ denotes the Whitney sum $E\oplus\dots\oplus E$ of $n$ copies of $E$. Thus $F\defeq (n-1)E\oplus k$ is an inverse for $E$. The assumptions on $S$ imply that $F\in S$, as desired. The same discussion applies in the case of real bundles under the condition $\oplus_{i>0} H^{4i}(M,\QQ)=0$ (see the proof of \cite[Corollary~2.11]{AGZ} for more information).
\end{proof}

\begin{lemma}\label{LEM:H2_existence_of_inverses}
Let $M\approx G\bq H$ be a closed biquotient with $G$ and $M$ simply connected and satisfying 
$$\oplus_{i>0} H^{2i}(M,\QQ)=H^{2}(M,\QQ)=\QQ.$$ 
Then any complex biquotient bundle has an inverse among biquotient bundles.
\end{lemma}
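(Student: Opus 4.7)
The plan is to cancel the first Chern class of any biquotient bundle using a single biquotient line bundle, and then exploit the vanishing of higher rational cohomology to conclude that what remains is torsion in reduced $K$-theory.

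First I would record the consequences of the hypotheses. Since $M$ is simply connected, $H^2(M,\ZZ)$ is torsion-free, so $H^2(M,\ZZ)\cong\ZZ$; and by assumption $H^{2i}(M,\QQ)=0$ for all $i\geq 2$. Because $G$ is simply connected, the homotopy long exact sequence of the principal bundle $H\to G\to M$ combined with $\pi_1(M)=0$ forces $H$ to be connected. Running the rational Serre spectral sequence of the associated fibration $G\to M\to BH$, the vanishings $H^1(BH,\QQ)=0$ (since $H$ is connected) and $H^1(G,\QQ)=H^2(G,\QQ)=0$ (since $G$ is a simply connected compact Lie group) leave only $E_2^{2,0}=H^2(BH,\QQ)$ as a contribution in total degree $2$, so the edge map $H^2(BH,\QQ)\to H^2(M,\QQ)$ is surjective.

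Next I would translate this into a statement about characters. The group $\hat H=\Hom(H,U(1))$ maps to $H^2(BH,\ZZ)$ by sending $\chi$ to the first Chern class of the line bundle it classifies, and this map spans $H^2(BH,\QQ)$ rationally because the semisimple factor of $H$ contributes only torsion while the central torus contributes freely. Pulling back to $M$, the image of the resulting homomorphism $\hat H\to H^2(M,\ZZ)\cong\ZZ$, $\chi\mapsto c_1(L_\chi)$ with $L_\chi:=G\times_H\CC_\chi$, is a full-rank subgroup $d\ZZ$ for some $d\geq 1$. Fix $\chi_0\in\hat H$ realizing $d$. For an arbitrary biquotient bundle $E=G\times_H V$, the identity $c_1(E)=c_1(L_{\det V})$ places $c_1(E)\in d\ZZ$; writing $c_1(E)=md$, the bundle $E':=E\oplus L_{\chi_0^{-m}}$ is still a biquotient bundle and has $c_1(E')=0$.

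The final step is to apply the Chern character. Since $\mathrm{ch}\colon K(M)\otimes\QQ\to H^{\mathrm{even}}(M,\QQ)$ is an isomorphism, and all its components beyond rank and $c_1$ land in $H^{2i}(M,\QQ)=0$ for $i\geq 2$, we obtain $\mathrm{ch}(E')=\mathrm{rank}(E')$, so $[E']-\mathrm{rank}(E')\cdot[\underline{\CC}]$ is torsion in $K(M)$. Consequently $nE'\oplus\underline{\CC}^k$ is trivial for some integers $n\geq 1$ and $k\geq 0$, and $F:=L_{\chi_0^{-m}}\oplus(n-1)E'\oplus\underline{\CC}^k$ is a direct sum of biquotient bundles, hence biquotient, and satisfies $E\oplus F=nE'\oplus\underline{\CC}^k$ trivial. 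The hardest part is the Chern-class cancellation: without the observation that the $c_1$ of any biquotient bundle is already the $c_1$ of a character line bundle via the determinant representation, one could not cancel $c_1(E)$ using a single biquotient line bundle summand.
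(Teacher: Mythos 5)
Your proof is correct, but it takes a genuinely different route at the decisive step. The paper cancels $c_1(E)$ by first showing that \emph{every} complex line bundle over $M$ is a biquotient bundle: since $\pi_2(M)\cong H_2(M)\neq 0$, the group $H$ splits off a circle factor up to cover (citing \cite[Proposition 3.3]{De17}), and then Proposition \ref{prop:allbundles} realizes each class in $H^2(M,\ZZ)$ as a biquotient line bundle, so one may choose $L$ with $c_1(L)=-c_1(E)$. You instead observe that $c_1(E)=c_1\bigl(G\times_H \det V\bigr)$, so $c_1(E)$ is automatically the first Chern class of a character line bundle, which is a biquotient bundle by definition; this lets you cancel it without any appeal to Proposition \ref{prop:allbundles} or to the circle-splitting result. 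In fact your spectral-sequence discussion of the image $d\ZZ$ of $\Hom(H,U(1))\to H^2(M,\ZZ)$ is redundant for your own argument: taking $\chi=(\det V)^{-1}$ directly gives $E'=E\oplus L_\chi$ with $c_1(E')=0$, and this streamlined version does not even use the simple connectivity of $M$ and $G$ or the condition $H^2(M,\QQ)=\QQ$, only $H^{2i}(M,\QQ)=0$ for $i\geq 2$ — so your route proves a slightly more general statement. Both arguments then conclude identically via the Chern character isomorphism $\K(M)\otimes\QQ\to H^{\mathrm{even}}(M,\QQ)$: the class of $E'$ minus its rank is torsion, so $nE'\oplus k$ is trivial and $L_\chi\oplus(n-1)E'\oplus k$ is a biquotient inverse. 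What the paper's longer route buys is the independently useful fact that all line bundles over such biquotients are biquotient bundles; what yours buys is self-containedness and weaker hypotheses.
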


\begin{proof}
Let $E$ be a complex vector bundle over $M$. Recall that the Chern character induces an isomorphism $\K(M)\otimes\QQ\to H^{\text{even}}(M,\QQ)$ \cite[Section~2.4]{AH61}. By the cohomological assumption on $M$, the only Chern class that survives is $c_1(E)$. 

On the other hand, if $M=G\bq H$ then, as $M$ is simply connected, $\pi_2(M)\cong H_2(M)$ is non-trivial, so $H$, up to cover, must split off a circle factor \cite[Proposition 3.3]{De17}. From Proposition \ref{prop:allbundles} below, all complex line bundles over $G\bq H$ are biquotient bundles, and there is precisely one such bundle for any element in $H^{2}(M,\ZZ)$. Let $L$ the line bundle with $c_1(L)=-c_1(E)$. Then $c_1(E\oplus L)= 0$ and hence the Chern character maps the K-theory class of $E\oplus L$ to the trivial element in $H^{\text{even}}(M,\QQ)$. Hence, the stable class of $E\oplus L$ corresponds to a torsion element in $\K(M)$, and consequently there exist integers $n,k$ for which $n(E\oplus L)\oplus k= nE\oplus nL \oplus k$ is a trivial bundle. Thus $(n-1)E\oplus nL\oplus k$ is an inverse for $E$ and also a biquotient bundle, as desired.
\end{proof}


\begin{proof}[Proof of Theorem~\ref{thm:biqinv_sufficient_conditions}]
The first two statements follow directly from Lemma~\ref{LEM:finite_Ktheory_existence_of_inverses} applied to the set $S$ of real (resp. complex) biquotient bundles together with the fact that $S$ is clearly closed under taking Whitney sums. The third statement is precisely Lemma~\ref{LEM:H2_existence_of_inverses}.
\end{proof}

\begin{proof}[Proof of Corollary~\ref{COR:dimensions_2_3_5}]
By the classification in \cite{To02,De14,Pa04}, such a biquotient is diffeomorphic to one of the following: $S^2,S^3,S^5,S^2\times S^3, SU(3)/SO(3)$ or the non-trivial $S^3$-bundle over $S^2$, denoted by $S^3\tilde\times S^2$.  Their topology is well known. The real case in the corollary follows from Lemma~\ref{LEM:finite_Ktheory_existence_of_inverses}. The complex case follows from Lemma~\ref{LEM:finite_Ktheory_existence_of_inverses} in the cases of $S^3, SU(3)/SO(3)$ and from Lemma~\ref{LEM:H2_existence_of_inverses} in the cases of $S^2,S^2\times S^3,S^3\tilde\times S^2$.
\end{proof}

We conclude this section with a proof of Proposition~\ref{PROP}.

\begin{proof}[Proof of Proposition~\ref{PROP}] 
Recall that the Grothendieck group of a set of vector bundles $S$ is defined as $\K(S)\defeq\{A-B : A,B\in S\}$, where $A_1-B_1=A_2-B_2$ if and only if there exists $m$ for which the bundles $A_1\oplus B_2\oplus m$ and $A_2\oplus B_1\oplus m$ are isomorphic. We will denote the image of an element $A-B\in\K (S)$ under $\imath\colon \K(S)\to \K(M)$ simply by $A-B$. 

Suppose first that $\imath$ is surjective. Then any $E\in \Vect(M)$ can be written as $E=A-B$ in $K(M)$ for certain $A,B\in S$. By taking an inverse $B'\in S$ of $B$ (i.e.~$B\oplus B'=n$) and adding the trivial element $0=B' - B'$ we get 
$$
E=E+B'-B'=A-B+B' - B'= (A+B')-(B+B')=A+B'-n.
$$
It follows that $E\oplus n\oplus m=A\oplus B'\oplus m$ for some $m$. The bundle in the right-hand-side belongs to $S$ by assumption and hence the claim follows for $k\defeq n+m$.

Suppose now that for every $E\in\Vect(M)$ there is $k$ for which $E\oplus k\in S$. Take any element $E_1 - E_2\in \K(M)$. By assumption, there is $k_i$ for which $F_i\defeq E_i\oplus k_i\in S$, for $i=1,2$. Clearly $E_1 - E_2=F_1 -F_2\in K(S)$ and hence the claim follows.
\end{proof}

\section{Biquotients bundles with no inverses}\label{sec:noinv}

We now move towards proving Theorem \ref{thm:biqinv}.  Given a manifold $M$, we say $M$ has Property $(\ast)$ if $$ \text{for any }x_1,..., x_n \in H^2(M),\quad \sum  x_i^2 = 0 \text{ if and only if } x_1 = x_2 = .... =x_n=0,$$
where we use the notation $H^k(M)$ as an abbreviation for $H^k(M,\mathbb{Z})$. Our interest in Property $(\ast)$ stems from the following proposition.

\begin{prop}\label{prop:noreverse} 
Suppose $M = G\bq T^k$ is a closed simply connected manifold satisfying property $(\ast)$.  Then in both the complex and real cases, the only biquotient bundles over $M$ which have a biquotient inverse are the trivial vector bundles.
\end{prop}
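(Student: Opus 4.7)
The plan is to exploit that $T^k$ is abelian, so all of its representations decompose into characters, which reduces the question to a statement about direct sums of (complex or real) line bundles and their first Chern classes. Specifically, over $M = G \bq T^k$, every complex biquotient bundle has the form $E = \bigoplus_i L_{\chi_i}$ with $L_{\chi_i} = G \times_{T^k} \CC_{\chi_i}$ a biquotient line bundle, and every real biquotient bundle has the form $E = \RR^\ell \oplus \bigoplus_i W_{\chi_i}$, where $W_{\chi_i}$ is the underlying real $2$-plane bundle of some $L_{\chi_i}$. A biquotient inverse $F$ of $E$ splits in the same way.

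For the complex case, I would write $E = \bigoplus_i L_{\chi_i}$ and $F = \bigoplus_j L_{\eta_j}$, and set $x_i = c_1(L_{\chi_i})$, $y_j = c_1(L_{\eta_j})$ in $H^2(M;\ZZ)$. Since $E \oplus F$ is itself a direct sum of complex line bundles, Newton's identity holds integrally in the form
\[ 2 c_2(E \oplus F) \;=\; c_1(E \oplus F)^2 \;-\; \sum_i x_i^2 \;-\; \sum_j y_j^2. \]
If $E \oplus F$ is trivial then $c_1(E \oplus F) = c_2(E \oplus F) = 0$, forcing $\sum_i x_i^2 + \sum_j y_j^2 = 0$ in $H^4(M;\ZZ)$. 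Property $(\ast)$ then kills every $x_i$ and $y_j$; since complex line bundles over any space are classified by $c_1$, each $L_{\chi_i}$ is trivial and hence so is $E$. For the real case, I would reduce to the complex case by complexifying: $E \oplus F$ trivial over $\RR$ implies $E_\CC \oplus F_\CC$ is trivial over $\CC$. Using $W_\chi \otimes_\RR \CC \cong L_\chi \oplus \overline{L_\chi}$, the bundles $E_\CC$ and $F_\CC$ are again direct sums of complex line bundles whose first Chern classes vanish because each character cancels its conjugate, and a short calculation from Newton's identity yields $c_2(E_\CC) = -\sum_i x_i^2$ and $c_2(F_\CC) = -\sum_j y_j^2$. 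Triviality of $E_\CC \oplus F_\CC$ then produces the same relation $\sum_i x_i^2 + \sum_j y_j^2 = 0$, and Property $(\ast)$ together with the classification of complex line bundles shows every $L_{\chi_i}$, and hence every $W_{\chi_i}$, is trivial.

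I do not foresee any deep obstacle; the entire proof is driven by the single integral identity $\sum_i x_i^2 + \sum_j y_j^2 = 0$ in $H^4(M;\ZZ)$, and Property $(\ast)$ is tailored precisely to handle this. The one point that requires care is keeping the computation integral: using Newton's identity in the form $2c_2 = c_1^2 - \sum x_i^2$ (rather than dividing by $2$) avoids any need to pass to rational coefficients, so Property $(\ast)$ as stated applies directly. Simple connectedness of $M$ enters only at the very end, through the classification of complex line bundles by their first Chern class.
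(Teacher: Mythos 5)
Your proposal is correct and follows essentially the same route as the paper: decompose the torus-associated bundles into (realifications of) line bundles, apply the integral identity $c_1^2 - 2c_2 = \sum x_i^2$ to the trivial sum $E\oplus F$, and invoke Property $(\ast)$ together with the classification of line bundles by $c_1$. Your complexification step in the real case is the paper's computation $p_1(rL) = c_1(L)^2-2c_2(L) = \sum c_1(L_i)^2$ in different clothing; just note that to keep everything integral one should get $c_2(E_\CC) = -\sum_i x_i^2$ from the Whitney product of the factors $(1+x_i)(1-x_i) = 1-x_i^2$ rather than by dividing Newton's identity by $2$, which could in principle lose information in the presence of $2$-torsion.
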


\begin{proof}Recall from e.g. \cite{BtD85}, that all irreducible complex representations of $T^k$ are $1$ dimensional over $\mathbb{C}$ and all real representations of $T^k$ are the realification of a complex representation, possibly summed with a trivial rank $1$ real vector bundle.  It follows that  all complex biquotient bundles over $M$ split as a sum of line bundles and that every real biquotient bundle over $M$ is the realification of a complex bundle over $M$, possibly summed with a trivial rank $1$ real vector bundle.

We shall show that if a sum of complex line bundles over $M$ is trivial then each of the line bundles in the sum is trivial. This implies that if the sum $E\oplus F$ of two biquotient bundles is trivial then so are $E$ and $F$, as claimed.

Suppose $L_1,...,L_n$ are line bundles over $M$ and set $L = \bigoplus L_i$.  Computing the first and second Chern classes via the Whitney sum formula, we find $c_2(L) = \sum_{1\leq i<j\leq n} c_1(L_i)c_1(L_j)$ and $c_1(L)=\sum_{i=1}^n c_1(L_i)$. Thus, $$c_1(L)^2 - 2c_2(L) = \sum_{i=1}^n c_1(L_i)^2.$$
If $L$ is trivial, the left hand side vanishes, and so by Property $(\ast)$, we conclude $c_1(L_i) = 0$ for all $i$.  Since line bundles are classified by their first Chern class, each $L_i$ must then be trivial.

Lastly, observe that if $rL$ denotes the realification of $L$, then its first Pontryagin class equals $p_1(rL) = c_1(L)^2 - 2c_2(L) = \sum c_1(L_i)^2$, so $p_1(rL)$ is non-trivial if at least one $L_i$ is non-trivial.
\end{proof}

Of course, in order to use Proposition \ref{prop:noreverse}, we must construct biquotients with Property $(\ast)$ and verify that they admit non-trivial biquotient vector bundles.  The latter statement is a consequence of the following proposition, Proposition \ref{prop:allbundles}.  This proposition is likely not new, but we could not locate it in the literature (see \cite[p.~227]{WZ90} or \cite[pp.~473-474]{KS91} for particular cases of it).

Given a subgroup $S^1\subseteq T^k$, we call a subgroup $T^{k-1}\subseteq T^k$ \textit{complementary} to $S^1$ if $S^1\cdot T^{k-1} = T^k$ and $S^1$ and $T^{k-1}$ intersect only at the identity.  If $S^1$ and $T^{k-1}$ are complementary, then every element in $T^k$ has a unique expression of the form $uw$ with $u\in S^1$ and $w\in T^{k-1}$.

\begin{prop}\label{prop:allbundles}
Suppose $M$ is a $2$-connected manifold and a torus $T^k$ acts freely with quotient $N = M/T^k$ and suppose $L$ is any complex line bundle over $N$.  Then there is a subgroup $S^1\subseteq T^k$ with complementary $T^{k-1}$ for which $L$ is isomorphic to $M/T^{k-1}\times_{S^1}\mathbb{C}$ where $S^1$ acts on $\mathbb{C}$ via $v\ast z = v^d z$ for some $ d\in \mathbb{Z}$. Equivalently, $L$ is isomorphic to $M\times_{T^k}\mathbb{C}$ where $T^k=S^1\cdot T^{k-1}$ acts on $\mathbb{C}$ via $(v(w_1,\dots,w_{k-1}))\ast z = v^d z$.
\end{prop}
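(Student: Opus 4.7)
The plan is to show that every complex line bundle on $N$ arises from a character of $T^k$, and then to put a given character into the desired normal form via a lattice argument.

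First, I would analyze the classifying map $f \colon N \to BT^k$ of the principal $T^k$-bundle $M \to N$. Its homotopy fiber is $M$, which is 2-connected by hypothesis, so $f$ induces isomorphisms on $\pi_i$ for $i \leq 2$. Combined with Hurewicz and universal coefficients (both $N$ and $BT^k$ are simply connected), this yields that $f^* \colon H^2(BT^k, \mathbb{Z}) \to H^2(N, \mathbb{Z})$ is an isomorphism of free abelian groups of rank $k$. Now $H^2(BT^k)$ is generated by the first Chern classes of the universal line bundles associated to the projection characters $\chi_i \colon T^k \to S^1$, and their pullbacks under $f$ are exactly $c_1(M \times_{T^k} \mathbb{C}_{\chi_i})$. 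It follows that every class in $H^2(N, \mathbb{Z})$ is the first Chern class of a biquotient line bundle, and since complex line bundles are classified by $c_1$, we get $L \cong M \times_{T^k} \mathbb{C}_\chi$ for some character $\chi$.

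Next I would reduce this character to the required form. Writing $\chi(t_1, \ldots, t_k) = t_1^{a_1} \cdots t_k^{a_k}$ (the trivial case is handled separately), set $d = \gcd(|a_1|, \ldots, |a_k|)$ and $b_i = a_i/d$, so that $\psi(t) \defeq t_1^{b_1} \cdots t_k^{b_k}$ is surjective. Primitivity of $(b_i)$ yields, via B\'ezout, integers $c_i$ with $\sum b_i c_i = 1$ and hence a section $s \colon S^1 \to T^k$ of $\psi$; its image $S^1 \subset T^k$ is complementary to $T^{k-1} \defeq \ker \psi$, which is a subtorus of dimension $k-1$ (connectedness follows from the splitting $T^k \cong S^1 \times T^{k-1}$, or directly from the homotopy long exact sequence of $\ker \psi \to T^k \to S^1$). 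By construction $\chi(s(v) w) = v^d$ for $v \in S^1$, $w \in T^{k-1}$. Finally, since $T^{k-1}$ acts freely on $M$ and trivially on $\mathbb{C}_\chi$, quotienting first by $T^{k-1}$ and then by $S^1$ identifies $M \times_{T^k} \mathbb{C}_\chi$ with $(M/T^{k-1}) \times_{S^1} \mathbb{C}$, where $S^1$ acts on $\mathbb{C}$ with weight $d$.

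The main obstacle I anticipate is the first step: bridging the topological classification of line bundles over $N$ (by $H^2$) with the group-theoretic data of characters of $T^k$. The 2-connectedness hypothesis is essential here, forcing $N$ to behave like $BT^k$ in low degrees so that every cohomology class descends from a character. Once that is established, the remainder is a transparent exercise in lattice arithmetic and the associated-bundle formalism.
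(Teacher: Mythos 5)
Your proof is correct, but it runs along a genuinely different track from the paper's. The paper works with the Serre spectral sequence of the principal bundle $T^k\to M\to N$: $2$-connectedness of $M$ gives the transgression isomorphism $H^1(T^k)\cong H^2(N)$, a primitive class $x=\sum b_i\,dz_i$ is completed to a matrix $B\in GL(k,\mathbb{Z})$ whose inverse cuts out the subgroups $S^1_x$ and $T^{k-1}_x$, and then the Euler class of the resulting circle bundle $M/T^{k-1}_x\times_{S^1_x}S^1$ is computed by hand, via an explicit commutative diagram of fibrations and the induced map of spectral sequences. You instead pass to the classifying map $f\colon N\to BT^k$, use the $2$-connected homotopy fiber $M$ to see that $f^*$ is an isomorphism on $H^2$, and conclude by naturality that every class is $c_1(M\times_{T^k}\mathbb{C}_\chi)$ for a character $\chi$; the remaining work is pure character arithmetic, splitting $T^k\cong S^1\times\ker\psi$ via a B\'ezout section of the primitive part $\psi$ of $\chi$, which is the same lattice fact as the paper's $GL(k,\mathbb{Z})$ completion in different clothing. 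Your route buys a cleaner verification: the Chern class identity comes for free from functoriality of the Borel construction, so no spectral-sequence comparison or diagram chase is needed, and the reduction of the character to weight $d$ on a circle factor (with $\ker\psi$ acting trivially) immediately yields both descriptions of $L$ asserted in the statement. The paper's route is more hands-on and self-contained: it exhibits the subgroups and the transgression generators explicitly and verifies the Euler class directly, at the cost of a longer computation. The only points you leave implicit --- that the residual $S^1$-action on $M/T^{k-1}$ is free and that the image of the B\'ezout section is a closed circle subgroup --- are immediate, so there is no gap.
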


Before proving this proposition, we note the following obvious corollary.

\begin{corollary}   
If $G$ is a simply connected compact Lie group and $T^k$, $k\geq 1$, acts via a biquotient action, then any line bundle $L$ over $G\bq T^k$ is a biquotient bundle. In particular, since line bundles are classified by $H^2(G\bq T^k)\cong \mathbb{Z}^k$, there are infinitely many pairwise non-isomorphic non-trivial biquotient bundles.
\end{corollary}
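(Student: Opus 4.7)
The plan is to show that the natural assignment from characters of $T^k$ to isomorphism classes of line bundles over $N$, sending $\chi$ to $M \times_{T^k} \mathbb{C}_\chi$, is already surjective; then every line bundle has the required form, and I can put the character in normal shape by an elementary basis change on $T^k$. As a preliminary step, the long exact homotopy sequence of the principal bundle $T^k \to M \to N$, combined with the $2$-connectedness of $M$ and the vanishing of $\pi_i(T^k)$ for $i \geq 2$, yields $\pi_1(N) = 0$ and $\pi_2(N) \cong \pi_1(T^k) \cong \mathbb{Z}^k$. By Hurewicz and the universal coefficient theorem, $H^2(N; \mathbb{Z}) \cong \mathbb{Z}^k$, and line bundles over $N$ are classified by their first Chern class.

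The key step is to identify this $H^2(N; \mathbb{Z})$ with the character lattice of $T^k$. I would do this via the classifying map $f : N \to BT^k$ of the principal bundle, whose homotopy fiber is $M$. Since $BT^k \simeq K(\mathbb{Z}^k, 2)$, the long exact sequence of homotopy groups, together with $2$-connectedness of $M$, shows that $f$ induces isomorphisms on $\pi_1$ and $\pi_2$; since both $N$ and $BT^k$ are simply connected, the induced map $f^{*} : H^{2}(BT^k; \mathbb{Z}) \to H^{2}(N; \mathbb{Z})$ is therefore an isomorphism. (Equivalently, the Serre spectral sequence of $M \to N \to BT^k$ has the potentially-contributing terms $E_2^{0,2} = H^2(M)$ and $E_2^{1,1} = H^1(BT^k) \otimes H^1(M)$ killed by $2$-connectedness.) The generators of $H^{2}(BT^k)$ are the first Chern classes of the tautological line bundles $ET^k \times_{T^k} \mathbb{C}_{\chi_i}$ associated to the coordinate characters, and $f$ pulls these back to $c_1(M \times_{T^k} \mathbb{C}_{\chi_i})$. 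Hence every class in $H^{2}(N; \mathbb{Z})$ equals $c_1$ of some $M \times_{T^k} \mathbb{C}_\chi$, and combining this with the classification of line bundles by $c_1$ shows that every line bundle on $N$ arises this way.

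For the normal form, given $\chi$ with weight vector $(d_1, \dots, d_k)$, set $d = \gcd(d_1, \dots, d_k)$; then $(d_1/d, \dots, d_k/d)$ is primitive and extends to a $\mathbb{Z}$-basis of $\mathbb{Z}^k$, yielding a matrix in $GL_k(\mathbb{Z})$ whose dual gives a coordinate change on $T^k$. In the new coordinates one obtains a splitting $T^k = S^1 \cdot T^{k-1}$ with $T^{k-1} \subseteq \ker \chi$ and $\chi(vw) = v^d$. Because $T^{k-1}$ acts trivially on $\mathbb{C}_\chi$, the bundle $M \times_{T^k} \mathbb{C}_\chi$ rewrites naturally as $(M/T^{k-1}) \times_{S^1} \mathbb{C}$ with the stated $S^1$-action $v \ast z = v^d z$, giving the first presentation in the statement, while collapsing the splitting back recovers the second. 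The main obstacle is the surjectivity in the second paragraph, where the $2$-connectedness hypothesis on $M$ is essential; the remaining arguments are bookkeeping.
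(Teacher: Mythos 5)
Your proof is correct, but it takes a genuinely different route from the paper's. The paper treats this corollary as an immediate consequence of Proposition~\ref{prop:allbundles} applied with $M=G$ (implicitly using the standard fact that a compact simply connected Lie group has $\pi_2=0$, hence is $2$-connected), and that proposition is proved by a hands-on Serre spectral sequence argument: $H^2(N)$ is identified with $H^1(T^k)$ via the transgression, the complementary subgroups $S^1_x$, $T^{k-1}_x$ are built from a matrix in $Gl(k,\mathbb{Z})$ whose first row is the coefficient vector of the class $x$, and the Euler class of $M/T^{k-1}_x\times_{S^1_x}S^1$ is computed through a map of spectral sequences coming from an explicit commuting diagram of bundle maps. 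You instead obtain the key surjectivity by passing to the classifying map $f\colon N\to BT^k$ of the principal bundle, using the homotopy exact sequence together with Hurewicz and universal coefficients to see that $f^*$ is an isomorphism on $H^2$, and then using naturality of associated bundles (and additivity of $c_1$ under tensoring, i.e.\ the homomorphism $\chi\mapsto c_1(M\times_{T^k}\mathbb{C}_\chi)$) to realize every class of $H^2(N)$ by a character; the normal form $(M/T^{k-1})\times_{S^1}\mathbb{C}$ is then recovered by the same kind of unimodular basis change the paper employs. Your approach buys a shorter, more conceptual identification of $H^2(N)$ with the character lattice, avoiding the explicit transgression and Euler class computation, while the paper's argument constructs the complementary subtori and the Euler class concretely. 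Two small points to make explicit when specializing to the statement as given: record that $\pi_2(G)=0$ for compact Lie groups, so the corollary's hypothesis that $G$ is simply connected yields the $2$-connectedness your argument needs, and note that $G\times_{T^k}\mathbb{C}_\chi$ is by definition a biquotient bundle because the free $T^k$-action on $G$ is the given biquotient action.
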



\begin{proof}[Proof of Proposition \ref{prop:allbundles}]
Because complex line bundles are classified by their first Chern class, it is sufficient to show every element of $H^2(N)$ can be realized as the first Chern class of a bundle of the form $M/T^{k-1}\times_{S^1} \mathbb{C}$ for some complementary $S^1, T^{k-1}$ in $T^k$. Further, every line bundle is naturally associated to a principal $S^1$-bundle, and under this association, the first Chern class of the line bundle agrees with the Euler class of the principal bundle.  Thus, it is enough to show that every element $x\in H^2(N)$ can be realized as the Euler class of a principal circle bundle of the form $M/T^{k-1}\times_{S^1} S^1$ where $S^1$ acts on itself by rotations at some speed.

The hypothesis on $M$ together with the Serre spectral sequence for the principal $T^k$-bundle $T^k\rightarrow M\rightarrow N$ imply that there is a natural isomorphism $H^1(T^k)\cong H^2(N)$.  We write  $v_i$ for the coordinates on $T^k$ and $H^\ast(T^k)\cong \Lambda_\mathbb{Z}(z_1,...,z_k)$ with each $z_i$ corresponding to the $i$-th factor of $T^k$. Then $H^2(N) = \mathbb{Z}^k$ is generated by $dz_1,...,dz_k$, where $d$ denotes the differential for the spectral sequence associated to the bundle.


Now, let $x = \sum b_i (dz_i)$ be any element of $H^2(N)$.  We may obviously assume $x\neq 0$.  Moreover, we may assume $\gcd(b_1,...,b_k) = 1$, for if we have a bundle of the form $M/T^{k-1}\times_{S^1} S^1$ with Euler class $x/\gcd(b_1,...,b_k)$, we may obtain $x$ as an Euler class by having the $S^1$ act on itself by $\gcd(b_1,...,b_k)$-fold rotation.

We now define $T_x^{k-1}$ and $S^1_x$.  Because $\gcd(b_1,...,b_k) = 1$, there is a matrix $B\in Gl(k,\mathbb{Z})$ whose first row is $(b_1,...,b_k)$, see e.g. \cite{Sh60}. Then $B$ defines an isomorphism $B:T^k\rightarrow T^k$ given by $$B(v_1,..., v_k) = (v_1^{B_{11}} v_2^{B_{12}} ... v_k^{B_{1k}}, ..., v_1^{B_{k1}} v_2^{B_{k2}}... v_k^{B_{kk}}).$$  We set
\begin{align*}
S_x^1 &= \{ B^{-1}(v_1,1,...,1) : (v_1,1,...,1)\in T^k\}\\
T_x^{k-1} &= \{B^{-1}(1,v_2,...,v_k) : (1,v_2,...,v_k)\in T^k\}
\end{align*}
Clearly $S_x^1$ and $T_x^{k-1}$ are complementary.


We now compute the Euler class of $M_x =M/T^{k-1}_x \times_{S^1_x} S^1$ where $S^1_x$ acts on $S^1$ as $v\ast w = v w$.  Define a map $\phi:T^k\rightarrow S^1$ by $\phi(v_1,..., v_k) =\pi_1(B(v_1,...,v_k)) = v_1^{b_1}...v_k^{b_k}$, where $\pi_1:T^{k}\rightarrow S^1$ is projection onto the first coordinate, $\pi_1(v_1,...,v_k) = v_1$.   Also, define $\psi:M\rightarrow M_x$ by $\psi(m) = [ mT^{k-1}_x,1]$ and consider the diagram of fiber bundles
$$\begin{diagram} T^k & \rTo^{\phi} & S^1\\ \dTo^{i_{T^k}}  & & \dTo^{i_{S^1}}\\ M & \rTo{\psi} & M_x\\ \dTo & & \dTo \\ N & \rTo{Id_N} & N\end{diagram}$$
where the bottom vertical maps are the obvious projections and $i_{T^k}, i_{S^1}$ are the inclusion of a fiber.  Fixing $m_0\in M$ and considering the corresponding point $\psi(m_0)\in M_x$, we have $i_{T^k}(v_1,..,v_k) = (v_1,...,v_k)m_0$ and $i_{S^1}(v) = [m_0 T^{k-1}_x,v]$.

We claim the diagram commutes.  Indeed, the bottom square obviously commutes.  For the top square, we 
then observe the top square commutes if and only if for any $(v_1,...,v_k)\in T^k$,  $$[(v_1,...,v_k)m_0T_x^{k-1},1] =  [m_0T^{k-1}_x, v_1^{b_1}...v_k^{b_k}].$$  
Writing $(v_1,...,v_k) = B^{-1}(w_1,1,...,1) B^{-1}(1,w_2,...,w_k)$ for some $(w_1,w_2,...,w_k)\in T^k$, we find $w_1 = \pi_1(B(v_1,...,v_k)) = v_1^{b_1} ... v_k^{b_k}$, so 
\begin{align*} [(v_1,...,v_k)m_0 T_x^{k-1},1] &= [B^{-1}(w_1,1,...,1) m_0 T_x^{k-1},1]\\ &= [ m_0T_x^{k-1}, w_1]\\ &= [m_0 T_x^{k-1}, v_1^{b_1}...v_k^{b_k}].
\end{align*}
Thus, there is a map of spectral sequences for the two bundles.  Writing $H^\ast(S^1) = \Lambda_\mathbb{Z}(z)$, and using $d$ and $d_x$ for the differential for the $M$ and $M_x$ bundles respectively, we now see that the Euler class $e(M_x)$ of $M_x$ satisfies $e(M_x) = d_x(z) = d\phi^\ast(z)$.  But, $\phi(v_1,...,v_k) = v_1^{b_1}...v_k^{b_k}$, so obviously $d\phi^\ast(z) = d\sum b_i z_i = x$.
\end{proof}


We now find examples of biquotients of the form $G\bq T^k$ with Property $(\ast)$.

\begin{prop}\label{PROP:CP2CP2_and_Eschenburg_satisfy_ast}
The space $\mathbb{C}P^2\sharp \mathbb{C}P^2$ (which is a biquotient of the form $(S^3)^2\bq T^2$ \cite[p.~404]{To02}) as well as the inhomogeneous biquotient $SU(3)\bq T^2$ both have Property $(\ast)$.
\end{prop}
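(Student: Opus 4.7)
The plan for both manifolds is to reduce Property $(\ast)$ to an indefiniteness condition on the degree-$4$ relation of $H^*(M;\mathbb{Q})$, and then verify that condition case by case. Since both manifolds are simply connected, $H^2$ is torsion-free and it suffices to check $(\ast)$ rationally.

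For $M=\mathbb{C}P^2\sharp\mathbb{C}P^2$, the intersection form is positive definite of signature $+2$, so in a basis $\{x,y\}$ of $H^2(M;\mathbb{Z})$ Poincar\'e-dual to the two $\mathbb{C}P^1$ summands we have $xy=0$ and $x^2=y^2=[M]^*$. Hence
\[
\sum_i(a_ix+b_iy)^2 \;=\; \Big(\sum_i(a_i^2+b_i^2)\Big)\,[M]^*,
\]
which vanishes iff every $a_i=b_i=0$, giving $(\ast)$ immediately.

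For $M=SU(3)\bq T^2$, I would compute $H^*(M;\mathbb{Z})$ via the standard Borel model: if $f_1,f_2\colon T^2\to SU(3)$ are the two components of the biquotient homomorphism $T^2\to SU(3)\times SU(3)$, then transgression in the Serre spectral sequence of $SU(3)\to ET^2\times_{T^2}SU(3)\to BT^2$ gives
\[
H^*(M;\mathbb{Z})\;\cong\;\mathbb{Z}[x,y]\big/\big(f_1^*c_2-f_2^*c_2,\;f_1^*c_3-f_2^*c_3\big),
\]
with $x,y\in H^2(BT^2;\mathbb{Z})$ the standard generators. The degree-$4$ relation $r_4\defeq f_1^*c_2-f_2^*c_2 = ax^2+bxy+cy^2$ is the difference of $e_2$ applied to the weights of $f_1$ and $f_2$. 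I would then extract the integers $a,b,c$ for the specific inhomogeneous Eschenburg action and check that the discriminant $b^2-4ac$ is \emph{positive}, i.e.\ $r_4$ is an indefinite real quadratic form in $x,y$.

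Granted this indefiniteness, $(\ast)$ follows by linear algebra. Any identity $\sum_i(a_ix+b_iy)^2=0$ in $H^4(M;\mathbb{Q})\cong\mathbb{Q}[x,y]_4/(r_4)$ lifts to $\sum_i(a_ix+b_iy)^2=\lambda\,r_4$ in $\mathbb{Q}[x,y]$ for some $\lambda\in\mathbb{Q}$; matching coefficients of $x^2,xy,y^2$ identifies the positive-semidefinite Gram matrix $\sum_i\begin{psmallmatrix}a_i^2 & a_ib_i\\ a_ib_i & b_i^2\end{psmallmatrix}$ with $\lambda\begin{psmallmatrix}a & b/2\\ b/2 & c\end{psmallmatrix}$, and the latter is indefinite whenever $\lambda\ne 0$. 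So $\lambda=0$, each $(a_ix+b_iy)^2$ vanishes in $\mathbb{Q}[x,y]$, and every $a_i=b_i=0$. The main obstacle is the cohomology computation and the arithmetic check $b^2>4ac$: for the \emph{homogeneous} flag $SU(3)/T^2$ the analogous relation is $r_4=x^2+xy+y^2$ with discriminant $-3$, and $(\ast)$ in fact fails there (the identity $4r_4=(2x+y)^2+y^2+y^2+y^2$ produces a nontrivial sum of squares that vanishes in $H^4$), so the content of the proposition is precisely that passing to the Eschenburg action flips the sign of this discriminant.
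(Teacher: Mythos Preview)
Your argument for $\mathbb{C}P^2\sharp\mathbb{C}P^2$ is exactly the paper's.

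For $SU(3)\bq T^2$, your strategy is sound and the linear-algebra reduction is correct: if the single degree-$4$ relation $r_4$ is an indefinite real quadratic form, then no nonzero multiple of it can equal a sum of squares of linear forms, so Property $(\ast)$ holds. The gap is that you never actually carry out the computation of $r_4$; you only promise to ``extract the integers $a,b,c$'' and check $b^2-4ac>0$. That is the entire content of the proposition in this case, so as written the proof is incomplete.

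The paper closes this gap by simply quoting the ring from Eschenburg: $H^\ast(SU(3)\bq T^2)\cong\mathbb{Z}[u,v]/\langle v^2-u^2-uv,\,u^3\rangle$. In your language $r_4=-u^2-uv+v^2$ has discriminant $(-1)^2-4(-1)(1)=5>0$, confirming your expectation that the inhomogeneous action flips the sign relative to the homogeneous flag. Rather than phrasing it via the discriminant, the paper uses the relation $v^2=u^2+uv$ directly: for $x_i=a_iu+b_iv$ one gets $x_i^2=(a_i^2+b_i^2)u^2+(2a_ib_i+b_i^2)uv$ in the basis $\{u^2,uv\}$ of $H^4$, and the $u^2$-coefficient of $\sum x_i^2$ is the integer sum of squares $\sum(a_i^2+b_i^2)$. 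This is your Gram-matrix positivity argument after a change of basis, but it avoids the need to recompute $r_4$ from the Borel model.

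Your observation that Property $(\ast)$ fails for the homogeneous flag $SU(3)/T^2$ via $(2x+y)^2+3y^2=4(x^2+xy+y^2)$ is correct and is a nice sanity check that something genuinely inhomogeneous is needed.
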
 

\begin{proof}For $\mathbb{C}P^2\sharp \mathbb{C}P^2$, recall that $H^\ast(\mathbb{C}P^2\sharp \mathbb{C}P^2) \cong \mathbb{Z}[u,v]/\langle u^2 - v^2, uv\rangle$ with both $|u|=|v| = 2$.  Then given elements $x_i = a_i u + b_i v$, one easily computes that $\sum x_i^2 = \sum (a_i^2 + b_i^2)u^2$, so obviously vanishes if and only if all $x_i = 0$.

For $SU(3)\bq T^2$, recall from \cite{Es92} that $H^\ast(SU(3)\bq T^2)\cong \mathbb{Z}[u,v]/\langle v^2 - u^2 - uv, u^3\rangle$ with $|u|=|v| = 2$. Given elements $x_i = a_i u + b_i v$, we compute that $\sum x_i^2 = \sum_i a_i^2 u^2 + b_i^2 v^2  + 2a_ib_i u v = \sum_i (a_i^2 + b_i^2) u^2 + (2a_i b_i + b_i^2)uv$.  From the $u^2$ component, it follows that $\sum x_i^2 = 0$ if and only if $a_i = b_i = 0$ for all $i$, i.e., if and only if $x_i = 0$ for every $i$.
\end{proof}

We next construct in each even dimension $n=2k\geq 4$ biquotients of the form $(S^3)^k\bq T^k$ having Property $(\ast)$.

Suppose $k\geq 2$.  We call a $k\times k$ integer matrix $A$ \textit{admissible} if 
$$A_{ij} = \begin{cases} 1  & i=j \text{ or } (i,j) = (2,1)\\ 2 & (i,j) = (1,2) \\ 0 & i < j \text{ with } (i,j)\neq (1,2)\end{cases}.$$   
For example, a typical $5\times 5$ example has the form
$$A= \begin{bmatrix} 1 &2 & 0 & 0 & 0\\ 1 & 1 & 0 & 0 & 0\\ A_{3,1} & A_{3,2} & 1  & 0 &0\\  A_{4,1} & A_{4,2} & A_{4,3} & 1 & 0\\ A_{5,1} & A_{5,2} & A_{5,3} & A_{5,4} & 1\end{bmatrix}.$$
Given an admissible $A$, we define a biquotient action of $T^k$ on $(S^3)^k$ as follows.  Viewing $S^3\subseteq \mathbb{C}^2$, on the $i$-th coordinate $(a_i,b_i)$ of $(S^3)^k$ we have $$(w_1,...,w_k)\ast(a_i,b_i) = (w_i a_i, w_1^{A_{i1}} \cdot ...\cdot w_k^{A_{ik}} b_i).$$

\begin{prop}\label{prop:free} For any admissible $A$, the above action is free.
\end{prop}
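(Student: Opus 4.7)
The plan is to suppose $(w_1,\ldots,w_k)\in T^k$ fixes some point $p=((a_1,b_1),\ldots,(a_k,b_k))\in (S^3)^k$ and deduce that $w_j=1$ for every $j$. On the $i$-th coordinate, fixed-point invariance reads $w_i a_i = a_i$ and $w_1^{A_{i1}}\cdots w_k^{A_{ik}}\, b_i = b_i$. Since $(a_i,b_i)\in S^3$, at least one of $a_i, b_i$ is nonzero, so each index $i$ contributes either the equation $w_i=1$ (when $a_i\neq 0$) or the equation $\prod_{j=1}^k w_j^{A_{ij}} = 1$ (when $b_i\neq 0$), possibly both.

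The key structural input is that $A$ is lower triangular with $A_{ii}=1$ for all $i$, the only nonzero entry strictly above the diagonal being $A_{12}=2$. Consequently, for every $i\geq 3$ either alternative expresses $w_i$ as a monomial in $w_1,\ldots,w_{i-1}$: directly in the first case, and because $\prod_j w_j^{A_{ij}}=1$ collapses to $w_i\cdot \prod_{j<i} w_j^{A_{ij}}=1$ in the second. A straightforward induction on $i$ then reduces the whole problem to showing that $w_1=w_2=1$.

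For this base case I would split into four scenarios according to which of the above alternatives applies for $i=1$ and $i=2$. Three of them are immediate: if $w_1=1$ and $w_2=1$ are both forced directly, we are done; if $w_1=1$ holds and the $i=2$ alternative gives $w_1 w_2 = 1$, then $w_2=1$; and symmetrically if $w_2=1$ holds and the $i=1$ alternative gives $w_1 w_2^2 = 1$, then $w_1=1$. The only nontrivial scenario is when the $i=1$ and $i=2$ alternatives give the system $w_1 w_2^2 = 1$ and $w_1 w_2 = 1$ simultaneously; this is a $\mathbb{Z}$-linear system in the logarithms of $w_1,w_2$ whose coefficient matrix is the upper-left $2\times 2$ block $\left(\begin{smallmatrix}1 & 2\\ 1 & 1\end{smallmatrix}\right)$ of $A$, and its determinant is $-1$, so the matrix is unimodular and forces $w_1=w_2=1$. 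This last subcase is the crux of the argument: it is precisely the fact that the $2\times 2$ block is unimodular (owing to the choice $A_{12}=2$ rather than $0$) that rules out any nontrivial periodic solution; the remaining structural conditions on $A$ play a purely bookkeeping role in the inductive step.
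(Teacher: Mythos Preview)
Your argument is correct. You verify freeness directly by analyzing the fixed-point equations coordinate by coordinate, using the lower-triangular shape of $A$ (for $i\geq 3$) to express each $w_i$ in terms of $w_1,\ldots,w_{i-1}$, and then disposing of the $2\times 2$ base case by the unimodularity of $\left(\begin{smallmatrix}1 & 2\\ 1 & 1\end{smallmatrix}\right)$.

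The paper proceeds differently: it argues by induction on $k$, quoting the $k=2$ case from the literature, and for the inductive step observes that the subgroup $\{(1,\ldots,1,w_k)\}$ acts only on the last $S^3$ factor, and there as the Hopf action. This exhibits $(S^3)^k\bq T^k$ as an $S^2$-bundle over $(S^3)^{k-1}\bq T^{k-1}$, where the latter action is governed by the admissible submatrix $A'$ obtained by deleting the last row and column; freeness then follows from the inductive hypothesis. Your approach is more elementary and self-contained (you do not need to outsource the $k=2$ case), and it makes explicit that the unimodularity of the top $2\times 2$ block is exactly what drives freeness. The paper's approach, on the other hand, simultaneously produces the iterated $S^2$-bundle structure $R(A)\to R(A')$, which is used later in the paper to compute $H^*(R(A))$; your direct argument does not yield this as a byproduct.
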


\begin{proof}We prove this by induction on $k$.  The case $k=2$ is \cite[Proposition 3.2]{De14}.

Suppose $A$ is any $k\times k$ admissible matrix with $k\geq 3$.  Because of the last column of zeros, any element $(1,...,1,w_k)\in T^k$ acts only on the last $S^3$ factor of $(S^3)^k$, and on this $S^3$ factor, it acts as the Hopf map with quotient $S^2$.  Writing $T^{k-1}$ for the set of points of the form $(w_1,...,w_{k-1},1)$, we may therefore view $(S^3)^3\bq T^k$ as $(S^3)^{k-1}\times_{T^{k-1}} S^2$.  The $T^{k-1}$ action on $(S^3)^{k-1}$ is given by the $(k-1)\times (k-1)$ matrix $A'$ formed by deleting the last row and last column of $A$.  In particular, $A'$ is admissible and so the $T^{k-1}$ action on $(S^3)^{k-1}$ is free, and thus, the $T^k$ action on $(S^3)^k$ is free.
\end{proof}

Since the action corresponding to an admissible matrix $A$ is free, the quotient space is a manifold which we denote by $R(A)$.  Then $R(A)$ is simply connected and has dimension $2k$.  Further, from the proof of Proposition \ref{prop:free}, we see that if $k\geq 3$, $R(A)$ naturally has the structure of an $S^2$-bundle over $R(A')$, where $A'$ is obtained from $A$ be deleting the last row and last column.  When $k = 2$, the conditions force $A = \begin{bmatrix} 1 & 2\\ 1 & 1\end{bmatrix}$.  In this case, $R(A)\cong \mathbb{C}P^2\sharp \mathbb{C}P^2$ \cite[Proposition 3.3]{De14}.

\begin{prop}\label{PROP:RA_satisifes_ast}
Suppose $A_{i1}\neq 0$ for all $1\leq i\leq k$.  Then $R(A)$ satisfies $(\ast)$.
\end{prop}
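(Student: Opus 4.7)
The plan is to present the integral cohomology ring of $R(A)$ in a form adapted to property $(\ast)$, and then to read off the property from two explicit coefficient calculations in $H^4$.

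First I would compute $H^*(R(A);\mathbb{Z})$ via the Borel fibration $(S^3)^k\to R(A)\to BT^k$, which exists since the $T^k$-action is free by Proposition~\ref{prop:free}. Both fiber and base have torsion-free cohomology, so the Serre spectral sequence is determined by the transgressions $d_4(z_i)\in H^4(BT^k)=\mathbb{Z}[t_1,\dots,t_k]$. The $T^k$-action on the $i$-th $\mathbb{C}^2$ has weights $e_i$ and $A_i=(A_{i1},\dots,A_{ik})$, so $d_4(z_i)$ equals the second Chern class of the associated $\mathbb{C}^2$-bundle, namely $t_i\sum_{j=1}^k A_{ij}t_j$. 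A dimension count shows these $k$ quadratics form a regular sequence and so
$$H^*(R(A);\mathbb{Z})\cong\mathbb{Z}[t_1,\dots,t_k]\Big/\Bigl(t_i\sum_{j=1}^k A_{ij}t_j \,:\, 1\leq i\leq k\Bigr).$$
Admissibility reduces the first two relations to $t_1^2=-2t_1t_2$ and $t_2^2=-t_1t_2$, and gives $t_i^2=-\sum_{l<i}A_{il}\,t_l t_i$ for $i\geq 3$; in particular $\{t_pt_q:1\leq p<q\leq k\}$ is a $\mathbb{Z}$-basis of $H^4(R(A))$.

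Next I would take $x=\sum_i c_i t_i\in H^2(R(A))$ and expand $x^2$ on this basis using the relations. A short calculation gives
$$x^2=-\bigl(c_1^2+(c_1-c_2)^2\bigr)\,t_1 t_2+\sum_{\substack{q\geq 3\\p<q}}\bigl(2c_p c_q-A_{qp}\,c_q^2\bigr)\,t_p t_q.$$
Setting $x_j=\sum_i c_{ji}t_i$ and assuming $\sum_j x_j^2=0$, the vanishing of the $t_1t_2$-coefficient forces $\sum_j\bigl(c_{j1}^2+(c_{j1}-c_{j2})^2\bigr)=0$, a sum of non-negative integers, so $c_{j1}=c_{j2}=0$ for every $j$. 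Substituting this back, the coefficient of $t_1 t_q$ for each $q\geq 3$ reduces to $-A_{q1}\sum_j c_{jq}^2$; since $A_{q1}\neq 0$ by hypothesis, $c_{jq}=0$ for all $j$. Hence every $x_j$ vanishes, which is property $(\ast)$.

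The main obstacle will be justifying the integral (rather than merely rational) cohomology ring presentation above, since the whole argument is an integer-coefficient computation. Rationally this follows from the usual Eschenburg formula; integrally one must verify that $\{t_i\sum_j A_{ij}t_j\}$ is a regular sequence in $\mathbb{Z}[t_1,\dots,t_k]$, yielding a torsion-free quotient of rank $2^k$ matching the total Betti number of $R(A)$, so that the Serre spectral sequence collapses with no extension problems. An alternative is to induct on $k$ using the $S^2$-bundle structure $R(A)\to R(A')$ from the proof of Proposition~\ref{prop:free} together with the Leray-Hirsch theorem, at the cost of identifying the Euler class of each $S^2$-bundle explicitly.
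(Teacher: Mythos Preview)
Your proof is correct and follows essentially the same line as the paper's: compute the ring $H^*(R(A))\cong\mathbb{Z}[u_1,\dots,u_k]/I$ with $I=\langle\,u_i\sum_j A_{ij}u_j\,\rangle$, read off the $u_1u_2$-coefficient of $\sum_j x_j^2$ as $-\sum_j\bigl(\alpha_{1j}^2+(\alpha_{1j}-\alpha_{2j})^2\bigr)$ to kill the first two coordinates, and then use the $u_1u_i$-coefficient $-A_{i1}\sum_j\alpha_{ij}^2$ together with $A_{i1}\neq 0$ to finish. The integral cohomology presentation you flag as the main obstacle is simply cited in the paper from \cite{De17}, so your Borel-fibration sketch (or the $S^2$-bundle induction you mention) is more detail than the paper itself provides.
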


\begin{proof}Following \cite[Section 2.4 and Proposition 4.26]{De17}, it is easy to see that $H^\ast(R(A)) \cong \mathbb{Z}[u_1,...,u_k]/I$ where all the $u_i$ have degree $2$ and $I$ is the ideal $I =\langle \sum_{j=1}^k A_{ij} u_iu_j \rangle $ with $1\leq i\leq k$. Hence modulo $I$, we have $u_1^2 = -2u_1 u_2$ and $u_i^2 = -\sum_{j\neq i} A_{ij} u_i u_j$ for $i\geq 2$.  In particular $H^4(R(A))$ has as a basis all $u_i u_j$ with $1\leq i < j \leq k$.

Now, suppose $x_j = \sum_{i=1}^k \alpha_{ij} u_i$ and that $\sum_j x_j^2 = 0$.  A simple computation shows that the $u_1u_2$ component of $x_j^2$ is $-2\alpha_{1j}^2 - \alpha_{2j}^2 + 2\alpha_{1j} \alpha_{2j} = -\alpha_{1j}^2 - (\alpha_{1j} - \alpha_{2j})^2$. Thus, the $u_1 u_2$ component of the equation $\sum_j  x_j^2 = 0$ is $$\sum_j -\alpha_{1j}^2 - (\alpha_{1j}-\alpha_{2j})^2 = 0,$$ so clearly $\alpha_{1j} = \alpha_{2j} = 0$ for every $j$.

Fix any $i\geq 3$.  Since $\alpha_{1j} = 0$ for every $j$, it now follows that the $u_1 u_i$ component of $(x_j)^2$ is $-A_{i1} \alpha_{ij}^2$, so the $u_1u_i$ component of the equation $\sum_j  x_j^2 =0$ is $-A_{i1}\sum_j\alpha_{ij}^2 = 0$.  As $A_{i1}\neq 0$ by hypothesis, we find that $\alpha_{ij} = 0$ for all $j$. Hence, $\alpha_{ij} = 0$ for all $i\geq 3$ and all $j$. That is, all $x_j$ are zero.
\end{proof}

\begin{remark}\label{REM:cohomology_of_CP2CP2}
Note that the cohomology ring $\mathbb{Z}[u_1,u_2]/\langle u_1^2 +2u_1 u_2,u_2^2+ u_1 u_2 \rangle$ of $R(A)$ with $k=2$ is isomorphic to the usual cohomology ring $\mathbb{Z}[u,v]/\langle u^2 - v^2, uv\rangle$ of $\mathbb{C}P^2\sharp \mathbb{C}P^2$ from Proposition~\ref{PROP:CP2CP2_and_Eschenburg_satisfy_ast}. An isomorphism can be given by identifying $u = u_1 + u_2$ and $v = u_2$.
\end{remark}


\begin{prop}\label{PROP:infiniteness_RA}  
For each fixed $k\geq 3$, there are infinitely many homotopy types among $R(A)$ having Property $(\ast)$.
\end{prop}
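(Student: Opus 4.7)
The plan is to build, for each $k\ge 3$, a one-parameter family of $R(A_n)$'s with Property $(\ast)$ whose cohomology rings realize infinitely many isomorphism classes. Fix an admissible $(k-1)\times(k-1)$ matrix $A'$ with $A'_{i,1}\neq 0$ for $3\le i\le k-1$, and let $A_n$ be the $k\times k$ admissible matrix whose top-left $(k-1)\times(k-1)$ block is $A'$ and whose last row is $(n,0,\ldots,0,1)$. By Proposition~\ref{PROP:RA_satisifes_ast}, each $R(A_n)$ has Property $(\ast)$. From the inductive bundle structure in the proof of Proposition~\ref{prop:free}, $R(A_n)\cong\mathbb{P}(L_n\oplus\mathbb{C})$ over the fixed base $N:=R(A')$, with $c_1(L_n)=n\bar u_1\in H^2(N)$, so the projective bundle formula gives
\[
H^*(R(A_n);\mathbb{Z}) \;\cong\; H^*(N;\mathbb{Z})[v]\big/\bigl(v^2 + n\bar u_1\,v\bigr).
\]

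To distinguish these rings I would use the following graded ring invariant: for $R$ with $R^0=\mathbb{Z}$, let $M(R)$ denote the multiset of lattice indices $[\mathrm{sat}(\mathbb{Z} x+\mathbb{Z} y):\mathbb{Z} x+\mathbb{Z} y]$ as $\{x,y\}$ ranges over unordered non-proportional primitive pairs in $R^2$ with $xy=0$. This multiset is preserved by graded ring isomorphisms, since such isomorphisms act by lattice automorphisms on $R^2$ and preserve zero-divisor relations. Writing a primitive zero-pair in $R_n:=H^*(R(A_n))$ as $\{a+\alpha v,\,b+\beta v\}$ with $a,b\in H^2(N)$, primitivity forces either $\alpha=\beta=0$ (so the pair lies in $H^2(N)$ and contributes to a fixed multiset $M_N$ depending only on $N$) or, after normalization, $\alpha=\beta=\pm 1$ with the pair of the form $\{a+v,\,\pm((n\bar u_1-a)+v)\}$ for some $a\in H^2(N)$ satisfying $a^2=n\bar u_1\,a$. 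The canonical solution $a=0$ always produces the pair $\{v,\,v+n\bar u_1\}$, whose sublattice $\mathbb{Z} v+\mathbb{Z} n\bar u_1\subset \mathbb{Z} v+\mathbb{Z}\bar u_1$ has index exactly $n$, so $n\in M(R_n)$ for every $n$.

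To conclude that the $M(R_n)$'s are pairwise distinct for infinitely many $n$, one must show that $n$ appears in $M(R_n)$ strictly more often than in $M(R_m)$ for $m\neq n$; it suffices to identify an infinite set of $n$ for which the non-canonical type-II pairs of $R_n$ do not produce extra indices equal to $n$, and for which $n\notin M_N$. The main obstacle is the Diophantine analysis of solutions $a\in H^2(N)$ to $a^2=n\bar u_1\,a$ and of the sublattice indices they yield. For $k=3$ and $N=\mathbb{C}P^2\sharp\mathbb{C}P^2$, this reduces to the representability of $2n^2$ as a sum of two squares: if $n$ is a prime $\equiv 3\pmod 4$, then $2n^2$ admits only the trivial representation $n^2+n^2$, so every type-II pair has sublattice index exactly $n$, and moreover $n\notin M_N$ since $M_N$ consists of primitive sums of two squares (Fermat's theorem). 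Hence the primes $\equiv 3\pmod 4$ provide an infinite family of pairwise non-isomorphic $R(A_n)$'s. For $k\ge 4$, an analogous inductive Diophantine argument, combined with a bound on the multiplicity of each integer in $M_N$ coming from the complete-intersection structure of $H^*(N)$, should yield an infinite distinguishing subset of $n$'s.
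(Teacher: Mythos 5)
There is a genuine gap, and it occurs at the heart of your distinguishing argument. Your classification of the ``type-II'' primitive zero-pairs is false: primitivity does \emph{not} force the $v$-coefficients to be $\pm 1$. Concretely, take $k=3$, so $N=\mathbb{C}P^2\sharp\mathbb{C}P^2$ with $H^*(N)\cong\mathbb{Z}[u,w]/\langle u^2-w^2,\,uw\rangle$ and $\bar u_1=u-w$, and let $n$ be a prime $\equiv 3 \pmod 4$, $n\neq 5$. In $R_n=H^*(N)[v]/(v^2+n\bar u_1 v)$ set $x=6n\,u-2n\,w+5v$ and $y=-n\,u-3n\,w+5v$. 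Then $x$ and $y$ are primitive (the coefficient gcds are $\gcd(2n,5)=1$ and $\gcd(n,5)=1$), non-proportional, and $xy=ab+5(a+b)v-25n\bar u_1 v=0$ since $ab=(-6n^2+6n^2)u^2=0$ and $a+b=5n\bar u_1$. So there are primitive zero-pairs with $\alpha=\beta=5$, and more generally with $|\alpha|=|\beta|=q$ for any $q$ having a prime factor $\equiv 1\pmod 4$. Consequently your assertion that ``every type-II pair has sublattice index exactly $n$'' is unjustified as stated (the pair above has saturation index $5n$), and the multiplicity comparison between $M(R_n)$ and $M(R_m)$ is not established: you would still need to show that in $R_m$, $m\neq n$, no pair with $q>1$ produces index exactly $n$ (this can be done, but it requires showing $q\in\{1,n\}$ and eliminating $q=n$ via primitivity, none of which appears in the proposal).

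The second, larger gap is that the statement is for every $k\ge 3$, but your argument is only attempted for $k=3$; for $k\ge 4$ you write that an ``analogous inductive Diophantine argument\dots should yield'' the result. There $H^2(N)$ has rank $k-1\ge 3$ and $H^4(N)$ has rank $>1$, so both the type-I multiset $M_N$ and the analysis of solutions of $ab=0$, $\alpha b+\beta a=\alpha\beta n\bar u_1$ change qualitatively, and nothing is proved. For comparison, the paper varies the first-column entries (taking them equal to a prime $p$ in rows $3,\dots,k$) and distinguishes the rings for all $k$ at once by an intrinsic characterization of the subgroup generated by $u_1,u_2$ --- namely that primitive $x,y$ with $x^2=y^2$ and $xy=0$ must lie in it --- followed by a reduction mod $p$ that contradicts primitivity of $\phi(u_3)$; that route avoids both of the issues above.
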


\begin{proof} For each odd prime $p\geq 3$, we let $R(p)$ denote $R(A)$ where $A$ has first column $(1,1,p,...,p)^t$, second column $(2,1,0,...,0)^t$, and for all $3\leq i\leq k$, with the $i$-th column of the form $(0,...,0,1,0...,0)^t$ with a $1$ in the $i$-th position.  For example, when $k = 5$, $$A= \begin{bmatrix} 1 &2 & 0 & 0 & 0\\ 1 & 1 & 0 & 0 & 0\\ p & 0 & 1  & 0 &0\\  p & 0 & 0 & 1 & 0\\ p & 0 & 0 & 0 & 1\end{bmatrix}.$$  By Proposition~\ref{PROP:RA_satisifes_ast}, all the $R(p)$ satisfy Property $(\ast)$. We claim that $H^\ast(R(p))$ is not isomorphic to $H^\ast(R(p'))$ for distinct odd primes $p,p'$.

The cohomology ring for $R(p)$ is then given via $H^*(R(p)) \cong \ZZ [ u_1, ... , u_k]/  I$ where all $|u_i| = 2$ and $I$ is the ideal $\langle u_1^2 + 2u_1 u_2,  u_2^2 + u_1 u_2, u_i^2 + p u_1 u_i\rangle$ for $3\leq i\leq k$. In particular, we observe that $u_2^2 = (u_1 + u_2)^2$ and $u_2 (u_1 + u_2)  =0$ for any $k\geq 2$ and any $p$. We denote by $U_{12}$ the subgroup of $H^2(R(p))$ generated by $u_1$ and $u_2$.


We claim that if $x,y\in H^2(R(p))$ are primitive elements and satisfy $x^2 = y^2$ and $xy=0$, then $x,y\in U_{12}$. To see this, write $x =\sum \alpha_i u_i$ and $y = \sum \beta_i u_i$ with $\gcd(\alpha_1,...,\alpha_k) = \gcd(\beta_1,...,\beta_k) = 1$.  The $u_iu_j$ components of the equations $x^2 =y^2$ and $xy = 0$ are

 \begin{tabular}{c|c|c}
Component & $x^2 = y^2$ & $xy=0$\\

\hline

$u_1 u_i$, $i\geq 3$ & $-p\alpha_i^2 + 2\alpha_i \alpha_1 = -p\beta_i^2 + 2\beta_i \beta_1$ & $\alpha_1\beta_i + \alpha_i\beta_1 - p\alpha_i\beta_i = 0$\\

$u_iu_j$, $i,j\geq 2$, $i\neq j$ & $2\alpha_i \alpha_j = 2\beta_i\beta_j$ & $\alpha_i \beta_j + \alpha_j\beta_i=0$.\\
\end{tabular}

Assume for a contradiction that, without loss of generality, $\alpha_3\neq 0$.  By replacing $x$ with $-x$, we may assume $\alpha_3 > 0$.   If $\beta_3 = 0$, then components of the equation $xy = 0$ easily yield $\beta_j = 0$ for all $j$.  This contradicts primitivity of $y$.  So, $\beta_3\neq 0$ and we may likewise assume $\beta_3 > 0$.

We will now show that $\alpha_3=\beta_3$.  Indeed, if $\alpha_3\neq \beta_3$, then up to swapping $\alpha_3$ and $\beta_3$, there is a prime $q$ and an integer $k \geq 1$ for which $q^k$ divides $\alpha_3$ but not $\beta_3$.  Reducing the equation $\alpha_3\beta_j + \alpha_j\beta_3=0$ mod $q^k$, we see that $q|\alpha_j $ for all $j\geq 2$.  Likewise, from $\alpha_1\beta_3 + \alpha_3\beta_1 -p\alpha_3\beta_3 = 0$, we deduce that $q|\alpha_1$ as well.   This contradicts primitivity of $x$, establishing that $\alpha_3=\beta_3$.

Returning to the equations $\alpha_i \alpha_3 = \beta_i\beta_3$, they now obviously imply that $\alpha_i = \beta_i$ for all $i\geq 2$.  Further, from the equation $-p\alpha_3^2 + 2\alpha_3\alpha_1 = -p\beta_3^2 + 2\beta_3\beta_1$, we now deduce that $\alpha_1 = \beta_1$.

Thus, if $\alpha_3\neq 0$, we must have $x = y$.  But then the equation $0 = xy = x^2$ contradicts the fact that $H^\ast(R(p))$ has Property $(\ast)$ by Proposition \ref{PROP:RA_satisifes_ast}.  Thus, the claim is established.

Now, suppose there is an isomorphism $\phi:H^\ast(R(p))\rightarrow H^\ast(R(p'))\cong \mathbb{Z}[u_1',...,u_k']/I'$ and let $U'_{12}$ be the subgroup of $H^2(R(p'))$ generated by $u_1'$ and $u_2'$.  Then the claim implies that $\phi(U_{12}) = U'_{12}$.   Thus $\phi(u_1) = \alpha_1 u_1' + \alpha_2 u_2'$ and $\phi(u_3) = \sum \gamma_i u_i'$ for some integers $\alpha_i,\gamma_i$.  Because $u_3^2 = -p u_1 u_3$, we must have $\phi(u_3)^2 = -p\phi(u_1)\phi(u_3)$.

For $i,j\geq 3$ with $i\neq j$, the coefficient of $u'_i u'_j$ in $\phi(u_3)^2=(\sum \gamma_i u_i')^2$ is $2\gamma_i\gamma_j$.  For $\phi(u_3)^2=-p\phi(u_1)\phi(u_3)$, the same coefficient is obviously $0$.  It now follows that $\gamma_i\neq 0$ for at most one $i\geq 3$.  On the other hand, since $u_3\notin U_{12}$, $\phi(u_3)\notin U'_{12}$, so some $\gamma_i \neq 0$ for at least one $i\geq 3$.  Thus, there is some unique $i$ for which $\phi(u_3) = \gamma_1 u_1' + \gamma_2 u_2' + \gamma_i u_i'$.

Since $\gamma_i\neq 0$, it follows that the equation $\phi(u_3)^2 = -p\phi(u_1)\phi(u_3)$ is equivalent to the following three equations, corresponding to the components $u_1'u_2',u_1'u_3',u_2'u_3'$, respectively.
\begin{align*} -\gamma_1^2 - (\gamma_1-\gamma_2)^2 &= -p(\alpha_1\gamma_2 + \alpha_2\gamma_1 -2\alpha_1\gamma_1 - \alpha_2\gamma_2)\\ 
-p' \gamma_i + 2\gamma_1 &= -p\alpha_1\\
2\gamma_2 &= -p\alpha_2.\end{align*}
Reducing all three equations mod $p$, one easily sees that $p|\gcd(\gamma_1, \gamma_2, \gamma_i)$, so $\phi(u_3)$ is not primitive.  However, $\phi$ is a isomorphism and $u_3$ is primitive, so we have a contradiction.
\end{proof}

We are now ready to prove Theorems \ref{thm:biqinv} and \ref{thm:improve}.  Then $n=4$ case of Theorem \ref{thm:biqinv} is provided by $(S^3)^2\bq T^2\cong \mathbb{C}P^2\sharp \mathbb{C}P^2$, see Propositions \ref{PROP:CP2CP2_and_Eschenburg_satisfy_ast} and \ref{prop:noreverse}.  The case $n\geq 6$ of Theorem \ref{thm:biqinv} is a special case of Theorem \ref{thm:improve}, so we just focus on the latter theorem.

\begin{proof}[Proof of Theorem~\ref{thm:improve}]

The existence of infinitely many homotopy types in each even dimension $n\geq 6$ is provided by the biquotients $R(p)$ with $p\geq 3$ prime thanks to Propositions \ref{prop:noreverse}, \ref{PROP:RA_satisifes_ast} and \ref{PROP:infiniteness_RA}.

For odd $n = 2k+1 \geq 9$, the examples are likewise provided by $R(p)\times S^3 \cong (S^3)^{k}\bq T^{k-1}$.  By the K\"unneth formula, $R(p)\times S^3$ has Property $(\ast)$ since $R(p)$ does.  Further, the proof that $H^\ast(R(p))\not\cong H^\ast(R(p'))$ for $p\neq p'$ only used the cup product from $H^2(R(p))$ to $H^4(R(p))$, so $R(p)\times S^3$ is homotopy equivalent to $R(p')\times S^3$ if and only if $p = p'$.

Lastly, in dimension $7$, we use biquotients of the form $Q(s,t):=(S^3\times (S^3)^2)\bq T^2$ where $T^2$ acts on $(S^3)^2$ with quotient $\mathbb{C}P^2\sharp \mathbb{C}P^2$ and $T^2$ acts on $S^3\subseteq \mathbb{C}^2$ via $(z,w)\ast (a,b) = (z^s w^t a, b)$. As shown in \cite[Proposition 4.35]{De17}, $H^\ast(Q(s,t)) \cong H^\ast(\mathbb{C}P^2\sharp \mathbb{C}P^2 \times S^3)$, independent of $s$ and $t$.  In particular, these have Property $(\ast)$ and hence Proposition~\ref{prop:noreverse} applies.  Moreover, $p_1(Q(s,t)) = \pm(6-s^2 - (s-t)^2)u$, where $u \in H^4(Q(s,t))\cong \mathbb{Z}$ is a generator.  Since rational Pontryagin classes are homeomorphism invariants \cite{No66} we have infinitely many homeomorphism types among these manifolds.
\end{proof}



\begin{remark} We point out that the formula for $p_1$ in \cite[Proposition 4.35]{De17} contains a misprint.  In the notation of that proposition, the correct formula is $p_1((S^3)^3\bq T^2)= \sum_{i=1}^3 (k_i-m_i)^2 t^2 + (l_i-n_i)^2u^2$.  In addition, the ideal $J$ is missing a generator $x^2$.
\end{remark}

\end{document}